\theoremstyle{plain}
\newtheorem{theorem}{Theorem}[section]
\newtheorem*{thm*}{Theorem}
\newtheorem{althm}{Theorem}
\newtheorem{proposition}[theorem]{Proposition}
\newtheorem{lemma}[theorem]{Lemma}
\newtheorem{corollary}[theorem]{Corollary}
\theoremstyle{definition}
\newtheorem{definition}[theorem]{Definition}
\newtheorem{example}[theorem]{Example}
\renewenvironment{proof}[1][\proofname]{\par
  \normalfont
  \topsep6\p@\@plus6\p@ \trivlist
  \item[\hskip\labelsep{\bfseries #1}\@addpunct{\bfseries.}]\ignorespaces
}{%
  \endtrivlist
}
\renewcommand{\proofname}{proof}
\theoremstyle{remark}
\newtheorem{remark}[theorem]{Remark}
\numberwithin{equation}{section}
\title{Almost mathematics of pointed symmetric monoidal model categories by Smith ideal theory}
\date{\today} 
\author{Yuki Kato}
\thanks{The author was supported by Grants-in-Aid for Scientific
 Research No.~23K030080, Japan Society for the Promotion of Science.}
\address{National institute of technology, Kurume college, 
	      1-1-1, Komorino, Kurume, Fukuoka, JAPAN 830-8555.}
\email{kato\_051@kurume-nct.ac.jp}
\subjclass{18N55 (primary), 18N70 (secondary)}
\keywords{Almost mathematics, Smith ideals, monoidal model categories, Bousfield localization and colocalization}
\newcommand{\DR}{\mathbb{R}}
\newcommand{\DL}{\mathbb{L}}
\newcommand{\Hom}{\mathrm{Hom}}
\newcommand{\Map}{\mathrm{Map}}
\newcommand{\Alg}{\mathrm{Alg}}
\newcommand{\Mod}{\mathrm{Mod}}
\newcommand{\CAlg}{\mathrm{CAlg}}
\newcommand{\tm}{\tilde{\mathfrak{m}}}
\newcommand{\Fun}{\mathrm{Fun}}
\newcommand{\Ch}{\operatorname{Ch}}
\def\qed{{\hfill $\Box$}}
\begin{document}
\thispagestyle{empty}
\begin{abstract}
This article is a generalization of a result in Quillen's note
\cite{Q} ``Module theory over non-unital rings'' giving a one-to-one
correspondence between bilocalization of abelian categories of modules
and idempotent ideals of the base ring. Faltings~\cite{Faltings}; Gabber
and Ramero~\cite{GR} established almost mathematics, the same as
Quillen's bilocalization of a category of modules by nil-modules. 

In this paper, by using the theory of Smith ideals mentioned in
Hovey~\cite{Smith-ideals}, we consider almost mathematics of symmetric
monoidal pointed model categories and prove a weak analogue of the
one-to-one correspondence in Quillen's note~\cite{Q}.
\end{abstract}

\maketitle

\section{Introduction}
\label{sec-Introduction}
Faltings~\cite{Faltings} first introduced the theory of almost mathematics to apply $p$-adic Hodge theory, and subsequently, Gabber and
Ramero~\cite{GR} developed the theory of algebra within this framework. In their terminology, the theory includes almost ring
theory, almost modules, almost algebras, and almost homotopy
algebras. Building on Gabber and Ramero's work, almost mathematics is
formulated by localizing an abelian category of modules at almost
isomorphisms: let $V$ be a unital commutative ring with an idempotent
ideal $\mathfrak{m}$ of $V$. A $V$-module $M$ is defined to be {\it almost
zero} if it is annihilated by $\mathfrak{m}$, and almost mathematics is an
algebraic theory constructed by localizing the category of $V$-modules with
respect to the Serre subcategory of almost zero modules. {\it Almost isomorphisms}
are morphisms whose kernels and cokernels are both almost zero. In the
localized category, module objects are referred to {\it as almost modules} ,
while algebra objects are called {\it almost algebras}.

Independently, in an unpublished note~\cite{Q}, Quillen considered the
same theory of almost mathematics as linear algebra over non-unital
rings. He characterized it as a bilocalization, that is, both a
localization and a colocalization, of abelian categories of
modules. Further, Quillen proved the following one-to-one correspondence
between idempotent ideals and reflexive bilocalization functors.
\begin{theorem}(\cite{Q} Proposition 6.5)
\label{QuillenTh}
Let $V$ be a commutative ring with a multiplicative unit. There is a
one-to-one correspondence between idempotent ideals of $V$ and Serre
subcategories $\mathcal{S}$ of $\Mod_V$ for which the localization
$F:\Mod_V \to \Mod_V/\mathcal{S} $ is also a colocalization. \qed
\end{theorem}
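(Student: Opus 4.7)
The plan is to construct explicit maps in both directions and verify they are mutually inverse. The forward map sends an idempotent ideal $I$ (with $I^2 = I$) to the full subcategory $\mathcal{S}_I := \{M \in \mathrm{Mod}_V : IM = 0\}$. The inverse map sends a Serre subcategory $\mathcal{S}$ for which $F \colon \mathrm{Mod}_V \to \mathrm{Mod}_V/\mathcal{S}$ is a bilocalization, with adjunctions $G \dashv F \dashv R$, to the image $I_\mathcal{S}$ of the counit $GF(V) \to V$, which is an ideal of $V$.

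For the forward direction, checking that $\mathcal{S}_I$ is Serre is routine since the condition $IM = 0$ is inherited by subobjects and quotients and closed under extensions. The substantive step is exhibiting both adjoints of the quotient $F$. I would construct the right adjoint as $M \mapsto \mathrm{Hom}_V(I, M)$, using that the inclusion $I \hookrightarrow V$ becomes an isomorphism after $F$; for the left adjoint I would use a functor built from $I \otimes_V -$. The required adjunction isomorphism and the full faithfulness after composition with $F$ both reduce to the identity $I \cdot I = I$.

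For the converse direction, the key step is showing that $I_\mathcal{S}$ is idempotent. I would exploit that $L := GF$ is an idempotent comonad: since the unit $\mathrm{id} \to FG$ is an isomorphism ($G$ being fully faithful), one has $LL = GFGF \simeq GF = L$, and applying this identity at $V$ and unwinding through the counit $L(V) \to V$ together with the multiplicative structure on $V$ yields $I_\mathcal{S}^2 = I_\mathcal{S}$. To complete the bijection I would verify the round trips: $I \mapsto \mathcal{S}_I \mapsto I_{\mathcal{S}_I}$ recovers $I$ by a direct computation of the colocalization at $V$, and $\mathcal{S} \mapsto I_\mathcal{S} \mapsto \mathcal{S}_{I_\mathcal{S}}$ recovers $\mathcal{S}$ because the bilocalization condition forces the equivalence $M \in \mathcal{S} \iff I_\mathcal{S} M = 0$.

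I expect the main obstacle to be translating the abstract comonad identity $LL \simeq L$ into the concrete arithmetic identity $I_\mathcal{S}^2 = I_\mathcal{S}$: this passage from a categorical statement about the module-valued functor $L$ to an equation of ideals in $V$ requires a careful compatibility between the counit and the multiplication on $V$, and is not implied by the localization property alone but relies essentially on the existence of the left adjoint.
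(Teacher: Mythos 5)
The paper states this theorem only as a citation to Quillen's unpublished note (Proposition~6.5) and supplies no proof, so there is no argument in the paper to compare against; what the paper does prove are the homotopical analogues (Theorems~\ref{theoremA} and~\ref{theoremB}). Your two-directional outline is the natural one and is consistent with how those homotopical theorems are organized, but two steps in the plan are thinner than they look.

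\emph{The adjoint formulas.} You propose $\Hom_V(I,-)$ for the section and a functor ``built from $I\otimes_V-$'' for the cosection, and assert that the verifications ``reduce to the identity $I\cdot I=I$.'' That is overstated. Idempotence does give you $I\otimes_V N=0$ and $\Hom_V(I,N)=0$ for every $N\in\mathcal{S}$ (write $a=\sum b_jc_j$ with $b_j,c_j\in I$, so $a\otimes n=\sum b_j\otimes c_jn=0$), and hence the vanishing of the relevant $\Hom$ groups. But the vanishing of $\Ext^1_V(N,\Hom_V(I,M))$ and, dually, $\Ext^1_V(I\otimes_VM,N)$ --- which is what you actually need to show that $\Hom_V(I,M)$ is $\mathcal{S}$-local and $I\otimes_VM$ is $\mathcal{S}$-colocal --- does \emph{not} follow from $I^2=I$ alone. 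This is precisely why Gabber--Ramero replace $I$ by $\tilde I:=I\otimes_VI$ and assume $\tilde I$ flat, and why the paper builds flatness of $\tilde I$ into Definition~\ref{idem-def} and only claims a \emph{weak} analogue of Quillen's statement. Quillen's theorem has no flatness hypothesis, so his construction of the adjoints (via nonunital module theory) is necessarily subtler than a bare $\Hom_V(I,-)$ / $I\otimes_V-$; as written, your plan would either have to import that construction or silently add a flatness hypothesis, in which case you would be proving the paper's weaker variant rather than the theorem as stated.

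\emph{The converse direction.} You correctly flag that turning $LL\simeq L$ into $I_{\mathcal{S}}^2=I_{\mathcal{S}}$ is the crux, but the needed ingredient is not just the existence of the left adjoint: it is the compatibility of the colocalization with $\otimes$. Concretely, one needs $\mathcal{S}$ to be a $\otimes$-ideal so that $L=GF$ carries an oplax monoidal structure $L(V)\simeq L(V\otimes V)\to L(V)\otimes L(V)$; composing with the counits then yields $I\subseteq\mathrm{im}\bigl(L(V)\otimes L(V)\to V\bigr)=I^2\subseteq I$. In $\Mod_V$ the $\otimes$-ideal property is in fact automatic for a bilocalizing Serre subcategory (it is closed under arbitrary coproducts and under quotients, so $N\otimes M$, being a quotient of a coproduct of copies of $N$, lies in $\mathcal{S}$), and this should be made explicit --- especially because the paper cannot get it for free and has to \emph{assume} it in Theorem~\ref{theoremB} via the lax Quillen monoidal hypothesis.
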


The purpose of this article is to establish a theory of almost
mathematics for symmetric monoidal model categories and to prove a
monoidal categorical analogue of Theorem~\ref{QuillenTh}. Namely, using
the commutative Smith ideal theory developed by
Hovey~\cite{Smith-ideals} and White--Yau~\cite{WY2024}, we introduce
homotopy analogues of idempotent ideals and almost isomorphisms, which
we refer to as homotopically idempotent Smith ideals and almost weak
equivalences. The main result of this paper are summarized in
Theorem~\ref{theoremA}.
\begin{althm}
Let $\mathcal{M}$ be a pointed symmetric monoidal model category with a monoidal unit object $V$, and let $j: I \to V$ be a homotopically idempotent commutative Smith ideal. Then the internal-hom functor
\[
 \Map_{\mathcal{M}}(\tilde{I},\, -):  \mathcal{M}      \to    \mathcal{M}
\]
is equivalent to the Bousfield bilocalization functor with respect to
almost weak equivalences.
\end{althm}
Since $\Map_{\mathcal{M}}(\tilde{I},\, -)$ is a right Quillen
functor, this theorem shows that the Bousfield localization with respect
to almost weak equivalences is both a left and a right Quillen functor.

Conversely, in the case that the monoidal unit generates the stable
symmetric monoidal model category in the sense of MacLane~\cite[Chapter
V, Section 7, p.127]{zbMATH00195199}, we can construct a homotopically
idempotent ideal (Proposition~\ref{theorem-idem} and
Theorem~\ref{theoremB}).

Furthermore, we generalize some results on almost algebras to the
model category setting using the language of Smith ideals. Again, let
$V$ be a
commutative ring with an idempotent ideal $\mathfrak{m}$. We assume that
$\tm$ is $V$-flat.  Let $\CAlg(\Mod_{V})$ denote the category of
$V$-algebras and $\CAlg(\Mod_{V}^{\rm al})$ the category of almost
algebras. Gabber and Ramero defined the functor
 \[
    (-)_{!!}: \CAlg(\Mod_{V}) \to  \CAlg(\Mod_{V}^{\rm al})
\]
 by setting \[
A_{!!} = V \amalg_{ \tm } (\tm \otimes_V A)
\]
 for any almost unital $V$-algebra $A$ in \cite[Section 2.2]{GR}. Those
morphisms $\tm \to V \to A $ and $\tm \to \tm \otimes_V A \to A$ induce
a ring homomorphism $A_{!!} \to A$, which is an almost isomorphism. Due to
\cite[Proposition 2.2.29]{GR}, the almost unitalization $(-)_{!!}$ is a
left adjoint of the localization $ \CAlg(\Mod_{V}) \to
\CAlg(\Mod_{V}^{\rm al})$. We call the left adjoint $(-)_{!!}$ {\it
almost unitalization} in this paper.  Using Smith ideal theory, we
generalize the almost unitalization functor $(-)_{!!}$ in the language
of Smith ideals.


In Section~\ref{sec:almost}, we introduce the theory of
almost mathematics of pointed symmetric monoidal model categories;
homotopically idempotent commutative Smith ideals, homotopically almost
zero objects, and homotopically almost weak equivalences.

In Section~\ref{sec:almost-alg}, we generalize the above almost
unitalization functor in the language of Smith ideal theory, and we
prove that it is a left adjoint of the Bousfield localization functor
with respect to the class of almost weak equivalences defined in
Section~\ref{sec:almost}; see Theorem~\ref{adjoint} and
Corollary~\ref{app-almost}.

In Section~\ref{sec:bilocal}, as the inverse direction of
Theorem~\ref{theoremA}, we construct a homotopically idempotent Smith
ideal from a given symmetric monoidal bilocalization functor on a
symmetric monoidal model category under mild conditions. The result is
proved as Theorem~\ref{theoremB}.

Finally, in Section~\ref{sec:non-unital}, we remark that the
cokernel-and-kernel functors in the (operadic) Smith ideal theory
induce a categorical equivalence between non-unital algebra objects and
augmented algebra objects of stable symmetric monoidal model categories.

For the reader's convenience, we mention related independent work of
Hebestreit and Scholze on higher almost ring
theory~\cite{hebestreit2025notehigherringtheory}. Their work develops a
higher almost mathematics framework in an $\infty$-categorical setting.
The present paper is complementary, developing a model-categorical
framework based on Smith ideal techniques. To clarify the independent
development of the two projects, we also record that the fourth version of the present preprint appeared as
arXiv:2301.04541v4 on July 29, 2024 UTC (July 30 in Japan), and that the first version of
Hebestreit and Scholze's preprint appeared as arXiv:2409.01940v1 on
September 3, 2024.

\subsection{Example: The almost mathematics of a stable model category of chain complexes}
\label{sec:Example}
In this section, we consider the model category of
chain complexes of modules as an example of almost mathematics of model
categories.  We fix a commutative base ring $V$ and an idempotent ideal
$\mathfrak{m}$ ($ \mathfrak{m}^2 = \mathfrak{m}$) of $V$, and we assume that
the tensor product $\tm= \mathfrak{m} \otimes_V \mathfrak{m}$ is a flat
$V$-module. Let $\Ch(V)$ denote the category of chain complexes of
$V$-modules.  Then $\Ch(V)$ has a canonical model structure called the
{\it injective model structure} that cofibrations are degree-wise
injections and weak equivalences are quasi-isomorphisms.  Since we
assume that $\tm$ is a flat $V$-module, the functor $ \tm \otimes_V
(-):\Ch(V) \to \Ch(V) $ is exact and a left Quillen functor, whose right
adjoint is the mapping complex functor $\Map_V(\tm ,\, -)$.

\begin{example}
A perfectoid commutative valuation ring is a valuation ring of a
perfectoid field in the sense of Scholze~\cite[p.246, Definition
1.2]{Sh12}.  Let $V$ be a perfectoid valuation ring. Then the set of
topologically nilpotent elements $\mathfrak{m}$ of $V$ is an idempotent
maximal ideal and a flat $V$-module, as explained, for example, in
Bhatt's lecture note~\cite[p.18, Example 4.1.3]{Bhatt-lecture}. In
general, the product map $\tm \to
\mathfrak{m}$ is an isomorphism if $\mathfrak{m}$ is a flat $V$-module.

\end{example}

In the category $\mathrm{Ch}(V)$, almost weak equivalences are defined
as follows:
\begin{definition}
A morphism $f: M \to N$ of $V$-complexes is said to be an {\it almost
quasi-isomorphism} if the induced morphism $\tm \otimes_V f: \tm
\otimes_V M \to\tm \otimes_V N $ is a quasi-isomorphism. Let
$\Ch(V)^{\rm al}$ denote the Bousfield localization of the model
category $\Ch(V)$ by the class of almost quasi-isomorphisms. A chain
complex $E$ is said to be
{\it almost local} if $E$ is a local object with respect to almost
quasi-isomorphisms. That is, for any almost quasi-isomorphism $f: M \to
N$, the induced map
\[
 f^*:    \Hom_{D(V)}( N ,\,   E  ) \to  \Hom_{D(V)}( M ,\,  E )
\]
is an isomorphism in the derived category $D(V)$ of $\Ch(V)$.
\end{definition}

In Quillen's note~\cite{Q}, a $V$-module $E$ is said to be {\it closed } if the induced
homomorphism $E \to \Hom_{V}(\mathfrak{m} ,\,E)$ is an isomorphism, and
it is shown that closed modules are almost local objects and vice versa;
see \cite[Proposition 5.3]{Q}.  The following lemma characterizes almost
local objects.
\begin{proposition}
\label{closed-local2}
Let $E$ be a chain complex of $V$-modules. Then $E$ is almost local if
and only if the product $\mu_E:\tm \otimes_V E \to E $ induces a
quasi-isomorphism
\[
 E \to  \Map_{V}(\tm,\, E).
\]
\end{proposition}
\begin{proof}
The only if direction is clear by definition of almost local objects.

Conversely, assume that $E$ is almost local. Since the canonical
morphism $\tm \otimes_V M \to M$ is an almost quasi-isomorphism for any
$M \in \Ch(V)$, the induced morphism
\[
 \Hom_{D(V)}(M,\, E  ) \to      \Hom_{D(V)}(M  ,\, \Map_V ( \tm ,\,  E)  )  \simeq  \Hom_{D(V)}(\tm \otimes_V  M ,\,  E  ).
\]
is an isomorphism. By the Yoneda lemma, $E \to \Map_V ( \tm ,\,  E)  $ is a quasi-isomorphism.
\qed
\end{proof}

\begin{theorem}
\label{Example}
The mapping complex functor
\[
 \Map_{V}(\tm ,\, -): \Ch(V) \to \Ch(V)^{\rm al}
\]
is equivalent to the localization functor $L^{\rm al}: \Ch(V) \to
\Ch(V)^{\rm al}$ on the derived categories.
\end{theorem}
\begin{proof}
This is the special case of Theorem~\ref{theoremA}. \qed
\end{proof}

\section{Almost mathematics of pointed symmetric monoidal model categories}
\label{sec:almost}
In this section, following White and Yau's work~\cite{WY2024}, we recall
the definition and some properties of symmetric monoidal model
categories and their (commutative) Smith ideals.  Next, in order to
establish almost mathematics, we define homotopically idempotent Smith
ideals, and we prove the main result, Theorem~\ref{theoremA}.

A category is {\it pointed} if it has an initial object $\emptyset$ and
a final object $*$ such that the unique morphism $\emptyset \to *$ is an
isomorphism. We call the initial object the {\it zero object}, and $0$
denotes it. Throughout this paper, we consider pointed (model)
categories.

Let $\mathcal{M}$ be a model category. For any object $M \in
\mathcal{M}$, let $Q(M) \to M$ denote a cofibrant replacement and $M \to
R(M)$ a fibrant replacement. It is well-known that the following
adjunctions
\[
\xymatrix@1{
     \mathcal{M}^{\rm cof} \ar[r]<0.5mm>  & \mathcal{M}  \ar[r]<0.5mm>^R \ar[l]<0.5mm>^Q \ar[l]<0.5mm>  &  \mathcal{M}^{\rm fib}  \ar[l]<0.5mm>
}
\]
are Quillen equivalences, where $ \mathcal{M}^{\rm cof}$
(resp. $\mathcal{M}^{\rm fib}$) denotes the full subcategory spanned by
all cofibrant (resp. fibrant) objects.

\subsection{Commutative Smith ideals of symmetric monoidal model categories}

A symmetric monoidal model category $\mathcal{M}$ is a model category
with a symmetric monoidal structure $ -\otimes - : \mathcal{M} \times
\mathcal{M} \to \mathcal{M}$ satisfying the following two axioms:
\begin{enumerate}[(1)]
 \item Unit axiom: The monoidal unit $V$ of $\mathcal{M}$ satisfies the
       condition that, for any cofibrant object $M$ and cofibrant
       replacement of $Q(V) \to V$, the induced morphisms $ Q(V) \otimes
       M \to V \otimes M \to M $ and $
M \otimes Q(V) \to M \otimes V \to M $ are weak equivalences.
 \item Pushout-product axiom: For any cofibrations $f:X_0 \to X_1$ and
       $g:Y_0 \to Y_1$, the pushout product $f \Box g :(X_0 \otimes Y_1)
       \amalg_{X_0 \otimes Y_0} (X_1 \otimes Y_0) \to X_1 \otimes Y_1 $
       is a cofibration. Further, if $f$ or $g$ is a trivial
       cofibration, so is $f \Box g$.
\end{enumerate}

We say that the symmetric monoidal structure on $\mathcal{M}$ is {\it closed} if $\mathcal{M}$ contains an internal hom-object
$\Map_\mathcal{M}(M,\,N)$ with respect to the monoidal structure $ -
\otimes - : \mathcal{M} \times \mathcal{M} \to \mathcal{M}$ for any
$M,\,N \in \mathcal{M}$.  In this case, the push-out product axiom
implies that, for any cofibrant object $M$, both induced functors
$- \otimes M: \mathcal{M} \to \mathcal{M}$ and $M \otimes -: \mathcal{M}
\to \mathcal{M}$ are left Quillen functors.

Further, we assume that all model categories in this paper are {\it
cofibrantly generated}.  For any class $\mathcal{T}$ of morphisms in a
category, ${}^\boxslash \mathcal{T}$ (resp. $\mathcal{T}^\boxslash$)
denotes the class of morphisms having the right (resp. left) lifting
property with respect to all morphisms in $\mathcal{T}$. Then
${}^\boxslash(\mathcal{T}^\boxslash)$ is said to be the {\it weakly
saturated class} of morphisms generated by $\mathcal{T}$. In this paper,
we always assume that all model categories are cofibrantly generated:
There exist small subsets $\mathcal{I}$ and $\mathcal{J}$ of the set of
morphisms of the model category such that the collection of all
cofibrations (resp. trivial cofibrations) is the weakly saturated class
of morphisms generated by $\mathcal{I}$ (resp. $\mathcal{J}$). In this
paper, we call $\mathcal{I}$ (resp. $\mathcal{J}$) {\it the set of
generating cofibrations} (resp. {\it the set of generating trivial
cofibrations}).  Further, if a cofibrantly generated model category is
locally presentable, it is said to be {\it combinatorial}. By the
theorem of Barwick and Smith~\cite[Theorem 4.7]{MR2771591}, for any left
proper and combinatorial model category and a small set of homotopy
classes of morphisms in it, the Bousfield localization exists such that
the localized model category is also left proper and combinatorial.

Let $\Delta^1$ denote the category with two objects $0$ and $1$, and
only one non-trivial morphism $0 \to 1$. The functor category
$\Fun(\Delta^1,\,\mathcal{M})$ from $\Delta^1$ to $\mathcal{M}$ is
called the {\it arrow category} of $\mathcal{M}$ and is denoted by
$\mathrm{Ar}(\mathcal{M})$.

By Hovey~\cite[Theorem 1.2]{Smith-ideals}, the arrow category
$\mathrm{Ar}(\mathcal{M})$ has two distinct symmetric monoidal
structures derived from that of $\mathcal{M}$. For any two morphisms
$f:X_0 \to X_1$ and $g: Y_0 \to Y_1$ in $\mathcal{M}$, one has a
commutative square:
\[
 \xymatrix@1{ X_0 \otimes Y_0 \ar[r]^{f \otimes \mathrm{id}}
\ar[d]_{\mathrm{id}\otimes g} & X_1 \otimes Y_0
\ar[d]^{\mathrm{id}\otimes g}\\
X_0 \otimes Y_1 \ar[r]_{f \otimes \mathrm{id}}& X_1 \otimes Y_1.
}
\]
The {\it diagonal (or tensor) product monoidal structure} is defined by
$f \otimes g$ as the composition
\[
f \otimes g = (f \otimes \mathrm{id}) \circ (
\mathrm{id} \otimes g)= ( \mathrm{id} \otimes g)\circ (f \otimes
\mathrm{id}) : X_0 \otimes Y_0 \to X_1 \otimes Y_1.
\]
The other is the {\it push-out
product monoidal structure} defined by the induced morphism:
\[
 f \Box g: ( X_0 \otimes Y_1   ) \amalg_{ X_0 \otimes Y_0 } (   X_1 \otimes Y_0 ) \to  X_1 \otimes Y_1.
\]

The arrow category $\mathrm{Ar}(\mathcal{M})$ has the evaluation
functors $\mathrm{Ev}_{i}(f: X_0 \to X_1)=X_i \ (i=0,\,1)$, which have a left
adjoint and a right adjoint:
\[
\xymatrix@1{
\mathrm{Ev}_{i} : \mathrm{Ar}(\mathcal{M})  \ar[r]  &
 \ar[l]<-1mm> \ar[l]<1mm>   \mathcal{M}:L_i ,\,U_i \ (i=0,\,1).
}
\]
Namely, those adjoint functors are defined by $L_0(X)=( \mathrm{id}:X
\to X )$, $U_0(X)= (X \to * )$, $L_1(X)=( \emptyset \to X )$, and
$U_1(X)= ( \mathrm{id}: X \to X )$. Further, those left adjoints $L_i :
\mathcal{M} \to \mathrm{Ar}^\Box(\mathcal{M}) \ (i=0,1)$ are left
Quillen and strict monoidal functors, and those right adjoints $U_i
:\mathcal{M} \to \mathrm{Ar}^\otimes(\mathcal{M}) \ (i=0,1)$ are right
Quillen functors.

Let $\mathrm{Ar}^\Box(\mathcal{M})$ (resp. $\mathrm{Ar}^\otimes(\mathcal{M})$) denote the symmetric monoidal category with respect to the push-out (resp. tensor) product structure whose monoidal unit is $L_1(V)$ (resp. $U_0(V)$), where $V$ is the monoidal unit of $\mathcal{M}$.  Then $\mathrm{Ar}^\Box(\mathcal{M})$
(resp. $\mathrm{Ar}^\otimes(\mathcal{M})$) equips a symmetric monoidal
model structure with respect to the projective (resp. injective) model
structure, which is defined by weak equivalences to be degree-wise weak
equivalences and fibrations (resp. cofibrations) to be degree-wise
fibrations (resp. degree-wise cofibrations).

These monoidal model categories $\mathrm{Ar}^\Box(\mathcal{M})$ and $\mathrm{Ar}^\otimes(\mathcal{M})$ are connected by the cokernel-and-kernel adjunction
\[
 \mathrm{cok} : \mathrm{Ar}^\Box(\mathcal{M}) \rightleftarrows \mathrm{Ar}^\otimes(\mathcal{M}) :\mathrm{ker},
\]
 which is a Quillen adjunction. The cokernel functor $\mathrm{cok}$ is defined by
\begin{align*}
  \mathrm{cok}: \mathrm{Ar}^\Box(\mathcal{M}) &\to
  \mathrm{Ar}^\otimes(\mathcal{M}) \\ (f :X \to Y) &\mapsto (Y \to
  \mathrm{Coker}(f)),
\end{align*}
and the kernel functor  is defined by
\begin{align*}
  \mathrm{ker}: \mathrm{Ar}^\otimes(\mathcal{M}) &\to \mathrm{Ar}^\Box(\mathcal{M}) \\
  (f :X \to Y) &\mapsto (\mathrm{Ker}(f) \to X),
\end{align*}
respectively. In the case that $\mathcal{M}$ is a pointed, left proper, and
combinatorial symmetric monoidal model category, the adjunction
is lax monoidal Quillen by Hovey's work (the proof is available at
\cite[Proposition 2.4.3]{WY2024}).  Further, if $\mathcal{M}$ is stable, in
addition, the adjunction $(\mathrm{cok},\,\mathrm{ker})$ is a Quillen
equivalence. (See Hovey~\cite[Theorem 1.4]{Smith-ideals} or White and
Yau~\cite[Proposition 2.3]{WY2024}).

We recall the definition of commutative Smith ideals. A commutative
monoid object $A$ of $\mathcal{M}$ is a monoid whose product $\mu_A: A
\otimes A \to A$ commutes with the braiding $ \tau_{A,\,A}: A \otimes A
\to A \otimes A$. We denote by $\CAlg(\mathcal{M})$ the subcategory of
commutative objects of $\mathcal{M}$.

\begin{definition}[{\rm c.f.} \cite{WY2024} Definition 3.3.1]
Let $\mathcal{M}$ be a pointed symmetric monoidal model category. A {\it
commutative Smith ideal} is a commutative monoid object of the symmetric
monoidal model category $\mathrm{Ar}^\Box(\mathcal{M})$.
\end{definition}
\subsection{Homotopically idempotent commutative Smith ideals}

We introduce a homotopy analogue of idempotent ideals as follows:
\begin{definition}
\label{idem-def} Let $\mathcal{M}$ be a pointed symmetric monoidal model
category and $V$ a monoidal unit object. A commutative Smith ideal $j:I \to V$ is {\it homotopically idempotent} if it satisfies the following properties:
\begin{itemize}
\item[(1)] The homotopically coCartesian square
\[
 \xymatrix@1{ I \ar[r]^j \ar[d] & V \ar[d]^{\mathrm{cok}(j)} \\
 0 \ar[r] & \mathrm{Coker}(j)
}
\] is also homotopy Cartesian.
 \item[(2)] Write $\tilde{I} = I \otimes I$. Then $\tilde{I}$ is cofibrant
	    and $\mu_{\tilde{I}}: \tilde{I} \otimes \tilde{I} \to
	   \tilde{I}$ has a homotopy section $\Delta : \tilde{I} \to
	   \tilde{I} \otimes \tilde{I}$ satisfying the associativity:
\[
  \xymatrix@1{  \tilde{I}\ar[d] \ar[r]  & \tilde{I} \otimes \tilde{I}  \ar[r]& ( \tilde{I} \otimes \tilde{I} )  \otimes \tilde{I} \ar[d] \\
  \tilde{I} \otimes \tilde{I}  \ar[rr] & & \tilde{I}  \otimes  ( \tilde{I} \otimes \tilde{I} ). }
\]
\item[(3)] The tensor product $\tilde{I} \otimes \tilde{I}$ is a group like object. That is, the unit morphism $ \tilde{I} \otimes \tilde{I} \to \Omega\Sigma (  \tilde{I} \otimes \tilde{I} )  $ is a weak equivalence, where $\Omega$ is the pointed loop functor  and $ \Sigma$ the suspension functor on the pointed model category $\mathcal{M}$.
\end{itemize}
\end{definition}
\begin{proposition}
Let $\mathcal{M}$ be a symmetric monoidal model category with a monoidal
unit $V$, and $j: I \to V$ a homotopically idempotent commutative Smith
ideal of $V$. Then the product $\mu_{\tilde{I}}: \tilde{I} \otimes
\tilde{I} \to \tilde{I}$ is a weak equivalence.
\end{proposition}
\begin{proof}
 We show that the homotopy kernel of $\mu_{\tilde{I}}: \tilde{I} \otimes
\tilde{I} \to \tilde{I}$ is contractible.  Let $K$ denote the homotopy
kernel of $\mu_{\tilde{I}}$.  One has a diagram
\[
  \xymatrix@1{  \tilde{I} \ar[r]^{\Delta} \ar[d] &   \tilde{I} \otimes \tilde{I} \ar[r]^{\mu_{\tilde{I}}} \ar[d] & \tilde{I} \ar[d] \\
                    0    \ar[r]  &                  K   \ar[r] \ar[d]          & 0  \ar[d] \\
       &   0 \ar[r] &      0 \amalg_{\tilde{I} \otimes \tilde{I}}\tilde{I}, }
\]
where all squares are homotopy Cartesian, and the upper two squares are homotopy Cartesian.  Since weak equivalences are closed under
retracts, $\tilde{I} \to \Omega \Sigma \tilde{I} $ and $K \to \Omega
\Sigma K $ are weak equivalence.  Since $ \mu_{\tilde{I}}$ has a
homotopy section, the homotopy cokernel $0 \amalg_{\tilde{I} \otimes
\tilde{I}}\tilde{I} $ is contractible. Therefore, $\Sigma K$ is
contractible, implying that $K$ is also contractible.
\end{proof}

\begin{remark}
Since the homotopy cokernel $0 \amalg_{\tilde{I} \otimes
\tilde{I}}\tilde{I} $ is contractible, the pushout product
$\mu_{\tilde{I}}:\tilde{I} \amalg_{\tilde{I} \otimes \tilde{I}}
\tilde{I} \to \tilde{I} $ is a weak equivalence, implying that $
\mu_{\tilde{j}}: \tilde{j} \Box \tilde{j} \to \tilde{j} $ is a
projective equivalence.
\end{remark}

\begin{example}
Let $V$ be a unital commutative ring and $\mathfrak{m}$ an
idempotent ideal of $V$. Under the assumption that $\tilde{\mathfrak{m}}
=\mathfrak{m} \otimes_V \mathfrak{m}$ is a flat $V$-module, the product
$\mu_{\mathfrak{m}}:      \mathfrak{m} \otimes_V  \mathfrak{m}   \to \mathfrak{m}$ induces isomorphisms $\mu_{\mathfrak{m}} \otimes \mathrm{id}_{\mathfrak{m}}: \tm \otimes_V  \mathfrak{m} \to  \tm$ and  $ \mathrm{id}_{\mathfrak{m}}\otimes  \mu_{\mathfrak{m}}: \mathfrak{m} \otimes_V \tm \to  \tm$.  Hence the composition
\[
 \Delta= ( \mu_{\mathfrak{m}} \otimes  \mu_{\mathfrak{m}} )^{-1}  =(\mathrm{id}_{\tm} \otimes \mu_{\mathfrak{m}})^{-1} \circ  (\mu_{\mathfrak{m}} \otimes \mathrm{id}_{\mathfrak{m}})^{-1}: \tm \to \tm \otimes_{V} \tm
\]
is an isomorphism, satisfying the coherence condition in Definition~\ref{idem-def}.
\end{example}

\begin{example}[\cite{zbMATH00063621,zbMATH06306453} Torsion and derived completion]
Let $A$ be a commutative ring and $\mathfrak{a} \subset A$ a finitely
generated weakly proregular ideal. Consider the injective model category
$\Ch(A)$ and let $K_\infty(\mathfrak{a})$ denote the infinite dual
Koszul complex. Then, on the derived category $D(A)$, the derived torsion functor $\DR\Gamma_{\mathfrak{a}}(-)$
is equivalent to the tensor functor $K_\infty(\mathfrak{a}) \otimes_A^{\DL} (-)$,
and the derived completion functor $\DL\Lambda_{\mathfrak{a}}(-)$
is equivalent to the internal-hom functor $
 \Map_A(K_\infty(\mathfrak{a}),\,-)$. Note that $K_\infty(\mathfrak{a})$ is homotopy idempotent:
$ K_\infty(\mathfrak{a}) \otimes_A^{\DL} K_\infty(\mathfrak{a})
 \simeq K_\infty(\mathfrak{a})$. Therefore, the adjunction
\[
 K_\infty(\mathfrak{a}) \otimes_A^{\DL} (-) :D(A)
 \rightleftarrows D(A)  :  \Map_A(K_\infty(\mathfrak{a}),\,-)
\]
gives a bilocalization of $D(A)$, where derived torsion is realized as the tensor side and derived completion as the internal-hom side.
In this sense, the derived completion functor is an instance of the
internal-hom side of our general tensor--internal-hom bilocalization.
This viewpoint is compatible with the equivalence between
firm and closed modules over an idempotent ideal: derived
$\mathfrak{a}$-torsion complexes are the fixed objects of
$K_\infty(\mathfrak{a}) \otimes_A^{\DL}(-)$, while derived
$\mathfrak{a}$-complete complexes are the fixed objects of
$\Map_A(K_\infty(\mathfrak{a}),\,-)$. The adjunction restricts to the
Greenlees--May duality between the corresponding full
subcategories~\cite{zbMATH06306453,PSYErratum}.
Hence the commutative Smith ideal $j:I \to A$ obtained from a cofibrant
replacement of $K_\infty(\mathfrak{a}) \to A$ gives a fundamental example of a
homotopically idempotent commutative Smith ideal governing this duality.
\end{example}

\begin{definition}
Let $j: I \to V$ be a homotopically idempotent commutative Smith ideal of the monoidal unit $V$. A morphism $f: M \to N$ in $\mathcal{M}$ is called an {\it almost
weak equivalence} if the induced morphism $\tilde{I} \otimes Q(f):
\tilde{I} \otimes Q(M) \to\tilde{I} \otimes Q(N) $ is a weak
equivalence, and $\mathcal{M}^{\rm al}$ denotes the Bousfield localization of $\mathcal{M}$ with respect to the class of almost weak equivalences.
\end{definition}


\begin{definition}
An object $M$ of $\mathcal{M}$
is {\it  almost local} if, for any almost weak
equivalence $f: N_1 \to N_2$, the induced morphism
\[
f^*:
\Hom_{\mathrm{Ho}\mathcal{M}}(N_2,\,M) \to
\Hom_{\mathrm{Ho}\mathcal{M}}(N_1,\,M)
\]
is an isomorphism.
Dually, $M$ is  {\it almost colocal} if, for any almost weak equivalence $f: N_1 \to N_2$, the induced map
\[
f_*: \Hom_{\mathrm{Ho}\mathcal{M}}(M,\,N_1) \to \Hom_{\mathrm{Ho}\mathcal{M}}(M,\,N_2)
\]
is an isomorphism.
\end{definition}

\begin{definition}
Let $j:I \to V $ be a homotopically idempotent commutative Smith
ideal. An object $E$ is {\it homotopically $I$-closed} if the induced
morphism
\[
  \Map_{\mathcal{M}}(\tilde{j},\,E ):  \Map_{\mathcal{M}}(V,\,E ) \to \Map_{\mathcal{M}}(\tilde{I},\,E )
\]
is a weak equivalence. An object $E$ is {\it homotopically $I$-firm} if $\tilde{j}: \tilde{I} \to V$ induces a weak equivalence
\[
  \tilde{j} \otimes E :   \tilde{I} \otimes E \to E.
\]
\end{definition}

\begin{remark}
In Quillen's note~\cite[Definition 5.2]{Q}, $I$-closed modules are said to be {\it closed}. However, this paper considers closed monoidal model categories, so we use the terminology {\it $I$-closed objects} for a given homotopically idempotent Smith ideal $I$.
\end{remark}

We verify the equivalence between almost local objects and homotopically $I$-closed objects.
\begin{proposition}
\label{closed-local}
Let $\mathcal{M}$ be a pointed symmetric monoidal model category with a monoidal unit $V$ and $j: I \to V$ a homotopically idempotent Smith ideal.
\begin{enumerate}[(1)]
 \item For any object $M$, $\tilde{j}: \tilde{I} \to V$ induces an almost weak equivalence $ \tilde{j}:\tilde{I} \otimes M \to V \otimes M \to M$.
 \item An object $E$ is almost local if and only if it is homotopically $I$-closed.
\end{enumerate}
\end{proposition}
\begin{proof}
To prove (1), consider $Q(\tilde{j}): \tilde{I} \to Q(V) \to V$ as a factorization, where the first morphism is a cofibration and the second is a trivial fibration. Then, by the unit condition of symmetric monoidal model categories, $\tilde{I} \otimes Q(V) \otimes Q(M) \to \tilde{I} \otimes V \otimes Q(M)$ is a weak equivalence by definition. Additionally, since $\tilde{I} \otimes \tilde{I} \to \tilde{I} \otimes Q(V)$ is a weak equivalence of cofibrant objects, and $- \otimes Q(M)$ is a left Quillen functor, by Ken Brown's lemma~\cite[p.6 Lemma 1.1.12]{Hoveybook}, we have that $\tilde{I} \otimes \tilde{I} \otimes Q(M) \to \tilde{I} \otimes Q(V) \otimes Q(M)$ is also a weak equivalence. Consequently, $\tilde{I} \otimes \tilde{I} \otimes Q(M) \to \tilde{I} \otimes Q(M)$ is a weak equivalence.

The if-direction of (2) is clear by the definition of almost weak equivalences. Now, let $E$ be an almost local object. Since the canonical morphism $\tilde{I} \otimes M \to M$ is an almost weak equivalence for any cofibrant object $M \in \mathcal{M}$ and $\tilde{I}$ is cofibrant, we have a chain of isomorphisms: $\Hom_{\mathrm{Ho}(\mathcal{M})}(M, E) \to \Hom_{\mathrm{Ho}(\mathcal{M})}(M, \Map_{\mathcal{M}} ( \tilde{I}, E)) \simeq \Hom_{\mathrm{Ho}(\mathcal{M})}(\tilde{I} \otimes M, E)$. Thus, by the Yoneda lemma, the induced morphism $E \to \Map (\tilde{I}, E)$ is a weak equivalence, implying the only-if direction of (2).
%
\qed
\end{proof}

\begin{theorem}
\label{theoremA}
Let $\mathcal{M}$ be a pointed symmetric monoidal model category with a monoidal unit object $V$, let $j: I \to V$ be a homotopically idempotent commutative Smith ideal, and let $\mathcal{M}^{\rm al}$ denote the Bousfield localization with respect to the collection of almost weak equivalences.  Then the internal-hom functor
\[
 \Map_{\mathcal{M}}(\tilde{I},\, -):  \mathcal{M}      \to    \mathcal{M}^{\rm al}
\]
is equivalent to the Bousfield bilocalization functor.

Furthermore, let $\mathcal{M}^{\rm fi}$ denote the homotopically essential image of $\tilde{I} \otimes Q: \mathcal{M} \to \mathcal{M}$. Then the induced adjunction
\[
  \tilde{I} \otimes Q:  \mathcal{M}^{\rm al}    \to
\mathcal{M}^{\rm fi}:\Map_{\mathcal{M}}(\tilde{I},\, -)
\]
is a Quillen equivalence. In particular, for any object $E$, the canonical morphism
\[
  E \to \Map_\mathcal{M}(\tilde{I},\,E)
\]
is an almost weak equivalence.
\end{theorem}
\begin{proof}

%
%
Let $L^{\rm al} : \mathcal{M}\to \mathcal{M}^{\rm al} $ be another Bousfield
localization by almost weak equivalences. By the universality of $L^{\rm al} $, the natural transformation $\mathrm{Id}
\to \Map_{\mathcal{M}}( \tilde{I} ,\, -)$ homotopically factors as
\[
 \mathrm{Id} \overset{\varphi}{\to} L^{\rm al} (-) \overset{\psi}{\to}
 \Map_{\mathcal{M}}( \tilde{I} ,\, -).
\]
By (2) in Proposition~\ref{closed-local}, the composition $ \psi(L^{\rm al} ) \circ
\varphi(L^{\rm al} ): L^{\rm al}  \to L^{\rm al}  \circ L^{\rm al}  \to \Map_{\mathcal{M}}( \tilde{I}
,\, L^{\rm al} (-)) $ is an equivalence. Therefore, the induced transformation
\[
\Map_{\mathcal{M}}(\tilde{I} ,\, \varphi): \Map_{\mathcal{M}}(
\tilde{I} ,\, -) \to \Map_{\mathcal{M}}( \tilde{I} ,\, L^{\rm al} (-)) 
\]
is
identified with a homotopy section of $\psi: L^{\rm al}  \to
\Map_{\mathcal{M}}(\tilde{I} ,\, -) $. Further, since the functor $
\Map_{\mathcal{M}}(\tilde{I} ,\, -)$ is homotopically idempotent, the
factorization
\[
   \Map_{\mathcal{M}}(\tilde{I} ,\, \psi) \circ  \Map_{\mathcal{M}}(\tilde{I} ,\, \varphi): \Map_{\mathcal{M}}(\tilde{I} ,\, -) \to \Map_{\mathcal{M}}(\tilde{I} ,\, L^{\rm al} (-)) \to \Map_{\mathcal{M}}(\tilde{I} ,\, -)\circ \Map_{\mathcal{M}}(\tilde{I} ,\, -)
\]
implies that $\Map_{\mathcal{M}}( \tilde{I} ,\,-)$ is a homotopy retraction of $L^{\rm al} $.  Hence, the natural transformation $\psi: L^{\rm al}  \to \Map_{\mathcal{M}}(\tilde{I},\,-)$ is a weak equivalence.

We show that the functor $\tilde{I} \otimes Q: \mathcal{M} \to
\mathcal{M}$ induced a left Quillen equivalence $\tilde{I} \otimes Q:
\mathcal{M}^{\rm al} \to \mathcal{M}^{\rm fi}$.
Since the functor
$\tilde{I} \otimes Q: \mathcal{M} \to \mathcal{M}$ sends almost weak
equivalences to weak equivalences, it factors through $\mathcal{M}^{\rm
al}$. By (1) in Proposition~\ref{closed-local} and the equivalence
$\psi: L^{\rm al} \to \Map_{\mathcal{M}}(\tilde{I},\, -)$, the unit
$\mathrm{Id} \to \Map_{\mathcal{M}}(\tilde{I},\, -) \circ ( \tilde{I}
\otimes Q ) $ induces an equivalence on the homotopy category of
$\mathcal{M}^{\rm al}$, implying that it is a left Quillen equivalence.
\qed
\end{proof}

\section{Commutative monoid objects in almost mathematics of pointed closed symmetric monoidal model categories}
\label{sec:almost-alg}
First, we consider the module category over a unital ring.  Let $V$ be a commutative unital ring with an idempotent ideal $\mathfrak{m}$. Assume that $\tm$ is a flat $V$-module.  In the language of Smith ideal theory, a $V$-homomorphism $\tilde{j}:\tm \to V$ is regarded as a commutative Smith ideal of $V$. For any $V$-algebra $A$, the almost unitalization $A_{!!}$ can be represented by the pushout product
\[
\tilde{j}    \Box (V \to A) = (V  \amalg_{\tm } ({\tm \otimes A} )      ) \to A,
\]
which is an almost isomorphism. In Gabber--Ramero's text~\cite{GR}, $A_{!!}$ is defined as the source of $\tilde{j} \Box (V \to A)$ without using Smith ideal theory. From the viewpoint of Smith ideal theory, one can easily check that the induced ring homomorphism $A_{!!} \to A$ is an almost isomorphism.  Indeed, by the associativity of pushout products,
\[
 (\tm \otimes_{V} {A}_{!!} \to \tm \otimes_{V} A  ) \simeq    L_1(\tm)    \Box (\tilde{j} \Box (V \to A)) \\
\to ( L_1(\tm) \Box \tilde{j}  )\Box (V \to A) \simeq  \mathrm{id}_{\tm } \Box ( V \to A)\simeq  \mathrm{id}_{\tm \otimes_V A }
\]
is an isomorphism between arrows, implying that those sources $\tm \otimes_V A_{!!}$ and $\tm \otimes_V A$ are isomorphic. We generalize those properties to model categories.

\subsection{Almost unitalization functors of commutative algebra objects of symmetric monoidal model categories}

In general the adjunction $\mathrm{Sym}: \mathcal{M} \rightleftarrows
\CAlg(\mathcal{M}):U$ does not induces a model structure on
$\CAlg(\mathcal{M})$ but it has the semi-model structure defined in
\cite[Definition 2.3]{spitzweck2001operads} or \cite[Definition
3.9]{WH2017}. A semi-model structure only requires that trivial cofibrations
with cofibrant domains have the left lifting property with respect to
all fibrations, and only maps with cofibrant domains factor into a
trivial cofibration followed by a fibration. If all objects of a
semi-model category are cofibrant, it is just a
model category (see \cite[Definition 3.9]{WH2017}).  Left and right Quillen functors between semi-model categories are defined in the same way as those of model categories.

For any class of morphisms $\mathcal{I}$, let
$\mathrm{Cell}(\mathcal{I})$ denote the class of morphisms that are
transfinite compositions of pushouts of elements of $\mathcal{I}$, and
a morphism in $\mathrm{Cell}(\mathcal{I}) $ is said to be a {\it relative
$\mathcal{I}$-cell complex}.

Let $\mathcal{M}$ be cofibrantly generated with a set of generating
cofibrations $\mathcal{I}$ and a set of generating trivial cofibrations
$\mathcal{J}$.  By White~\cite[Corollary 3.8]{WH2017}, if the domains of
$\mathcal{I}$ (resp. $\mathcal{J}$) are small relative to $\mathrm{Cell}
(\mathcal{I} \otimes \mathcal{M})$ (resp. $\mathrm{Cell} ( \mathcal{J}
\otimes \mathcal{M})$), then the category of commutative $R$-algebras
has a semi-model structure inherited from $\mathcal{M}$ for any
commutative monoid object $R$.


We recall the definition of lax monoidal Quillen adjunctions:
\begin{definition}[\cite{zbMATH01924523} Definition 3.6]
\label{lax-Quillen}
Let $ F : \mathcal{M } \rightleftarrows \mathcal{N} : U $ be a Quillen
adjunction of monoidal model categories. Then we say that $(F,\,U)$ is
{\it lax Quillen monoidal} if it satisfies the following properties:
\begin{itemize}
 \item[(1)] The right Quillen functor $U$ is lax monoidal.

 \item[(2)] For any cofibrant objects $M$ and $N$ in $\mathcal{M}$, the
	    induced morphism
\[
\nabla_{M,\,N}: F(M \otimes N) \to F(M) \otimes F(N)
\]
by the canonical morphism $\Delta_{M,\,N}: M \otimes N \to U(F(M)) \otimes
	    U(F(N)) \to U(L(M) \otimes L(N) )$ is a weak equivalence.
\item[(3)] For any cofibrant replacement $q(\mathbf{1}_\mathcal{M} )  : Q(\mathbf{1}_\mathcal{M} ) \to	    \mathbf{1}_\mathcal{M} $ of the monoidal units
	    $\mathbf{1}_\mathcal{M}$ of $\mathcal{M}$ and $\mathbf{1}_\mathcal{N}$ of $\mathcal{N}$, the composition
\[
   F( Q(\mathbf{1}_\mathcal{M} ) ) \to  F( \mathbf{1}_\mathcal{M}  ) \to
\mathbf{1}_\mathcal{N}
 \]
is a weak equivalence.
\end{itemize}
 \end{definition}

\begin{proposition}
\label{Quillen-monoidal} Let $j: I \to M$ be a homotopically idempotent
commutative Smith ideal. Then the Quillen equivalence
\[
 \tilde{I} \otimes Q:  \mathcal{M}^{\rm al}  \rightleftarrows   \mathcal{M}^{\rm fi}: L^{\rm al}
\]
is lax Quillen monoidal.
\end{proposition}
\begin{proof}
Since the class of almost weak equivalences between cofibrant objects is
closed under tensor product, the localization $L^{\rm al} :\mathcal{M}
\to \mathcal{M}^{ \rm al} $ is lax monoidal. Furthermore, given any pair
of cofibrant objects $M$ and $N$, the product $ \mu_{\tilde{I}}:
\tilde{I} \otimes \tilde{I} \to \tilde{I}$ induces a weak equivalence
\[
  (\tilde{I}\otimes Q(M)) \otimes
  (\tilde{I}\otimes Q(N)) \simeq  (\tilde{I}\otimes M) \otimes
  (\tilde{I}\otimes N)  \to \tilde{I} \otimes (M \otimes N).
\]
Furthermore, since $Q(M) \otimes Q(N)$ is cofibrant, $Q(M) \otimes Q(N) \to M \otimes N$ factors through the cofibrant replacement $Q(M \otimes N)$, implying that  Condition (2) of Definition~\ref{lax-Quillen} holds. Condition (3) is obvious.  \qed
\end{proof}

\begin{corollary}
\label{closed-alg} Let $j: I \to M$ be a homotopically idempotent
commutative Smith ideal. The mapping functor
$\Map_{\mathcal{M}}(\tilde{I},\,-): \mathcal{M} \to \mathcal{M}^{\rm
al}$ is lax monoidal, and the Quillen equivalence $( \tilde{I} \otimes
Q,\, \Map_{\mathcal{M}} (\tilde{I},\,R(-)))$ induces on their
commutative monoid objects
\[
 \tilde{I} \otimes Q: \CAlg( \mathcal{M}^{\rm al})  \rightleftarrows   \CAlg(\mathcal{M}^{\rm fi}) : \Map_\mathcal{M}(\tilde{I},\,-)
\]
between semi-model categories.
 \qed
\end{corollary}

Now, for any commutative monoid object $A$ of $\mathcal{M}$, we define a monoid object $A_{!!}$ and the induced morphism $A_{!!} \to A$ is an almost weak equivalence under some mild conditions.
\begin{proposition}
\label{almost-unitalization} Let $\mathcal{M}$ be a pointed closed
symmetric monoidal model category with a monoidal unit object $V$ and
$A$ a commutative algebra object of $\mathcal{M}$.  Let $j : I\to V $ be
a homotopically idempotent commutative Smith ideal of $V$. For any
monoid morphism $f: A \to B$ of commutative monoid objects, the induced
morphism
\[
   \tilde{j}\Box f:  (V \otimes A) \amalg_{\tilde{I} \otimes A} (\tilde{I} \otimes B) \to B
\]
is a monoid morphism.
\end{proposition}
\begin{proof}
Replacing $V$ and $A$ and considering $\CAlg(\mathcal{M})_{A/}$, it is
   sufficient to prove that \[
\tilde{j}\Box L_1(A): A_{!!} = V
   \amalg_{\tilde{I} } (\tilde{I} \otimes A) \to A
\]
is a monoid morphism between commutative monoid objects. The monoid
structure on $A_{!!}$ is induced by those morphisms
\[
  \eta_A: V \otimes A \to A \ \text{ and } \  \tilde{j} \otimes \mathrm{Id_A}:  \tilde{I} \otimes A \to V \otimes A \to A,
\]
and the induced morphism
\[
  \mu_{\tilde{I}} \otimes \mu_{A} : ( \tilde{I} \otimes A ) \otimes  ( \tilde{I} \otimes A )\to  ( \tilde{I} \otimes A ).
\]
Then those morphisms induces $\eta_{A_{!!}}: V \otimes A_{!!}  \to A_{!!} $
and $\mu_{!!} : A_{!!} \otimes A_{!!} \to A_{!!} $ satisfy the
associativity, the unitality, and the commutativity inherited by all of $V$, $\tilde{I}$, and $\tilde{I} \otimes {A}$. Hence, $A_{!!}$ has a commutative monoid structure, and the square $(\tilde{I}  \to V)  \to (\tilde{I} \otimes A \to A) $ is a morphism of commutative Smith ideals, implying that $ A_{!!} \to A$ is a morphism of commutative monoid objects. \qed
\end{proof}
\begin{proposition}
\label{firm-alg} Let $\mathcal{M}$ be a pointed closed symmetric
monoidal model category with a monoidal unit object $V$ and $A$ a
commutative algebra object of $\mathcal{M}$. Let $j: I\to V $ be a
homotopically idempotent commutative Smith ideal of $V$.  If $0 \to V
\to A$ or $V \to A$ is a cofibration, then the pushout product
\[
 (\tilde{j}\Box L_1(A):  A_{!!} \to A ) \to (\mathrm{Id}_{A})
\]
is an almost weak equivalence, where we write
 \[
   A_{!!} =\mathrm{Ev}_0( (\tilde{j}:\tilde{I} \to V    ) \Box (L_1(A):V \to A  )) = V \amalg_{\tilde{I}} (\tilde{I} \otimes A)).
\]
\end{proposition}
\begin{proof}
Assume that $0 \to A$ is a cofibration. By definition of homotopy
pushout, let $ \tilde{I} \to (\tilde{I} \otimes A)' \to \tilde{I}
\otimes A$ be a factorization of a cofibration to a trivial fibration, then
the induced morphisms
\[
    Q(V) \amalg_{\tilde{I}} (\tilde{I} \otimes A)'  \to Q(A_{!!}) \to A_{!!}
\]
are weak equivalences. Therefore $ \tilde{I} \otimes (\tilde{I} \otimes
A)' \to \tilde{I} \otimes Q(A_{!!})$ is a weak equivalence. Since
$(\tilde{I} \otimes A)' \to \tilde{I} \otimes A$ is a weak equivalence
between cofibrant objects, $ \tilde{I} \otimes (\tilde{I} \otimes A)'
\to \tilde{I} \otimes ( \tilde{I} \otimes A) \to \tilde{I} \otimes A$ is
a composition of weak equivalences.  Therefore $ \tilde{I} \otimes
Q(A_{!!}) \to \tilde{I} \otimes A$ is a weak equivalence.

Next, we assume that $V \to A$ is a cofibration. Then $\tilde{I} \otimes
A$ is cofibrant. Therefore $\tilde{I} \otimes A \to A$ factors as
\[
 \tilde{I} \otimes A \to Q(A_{!!})  \to Q(A) \to A,
\]
inducing a tower of  morphisms
\[
  \cdots \to   \tilde{I}^{\otimes 3} \otimes Q(A) \to  \tilde{I}^{\otimes 3} \otimes A \to   \tilde{I}^{\otimes 2} \otimes   Q(A_{!!})  \to     \tilde{I}^{\otimes 2} \otimes Q(A) \to \tilde{I} \otimes A \to  \tilde{I} \otimes Q(A_{!!})  \to  \tilde{I} \otimes Q(A).
\]
Since $\tilde{I}^{\otimes 2} \otimes Q(A_{!!})  \to \tilde{I}^{\otimes
2} \otimes Q(A) \to \tilde{I}^{\otimes 2} \otimes A \to \tilde{I}
\otimes Q(A_{!!})$ and $ \tilde{I}^{\otimes 2} \otimes Q(A) \to
\tilde{I} \otimes A \to \tilde{I} \otimes Q(A_{!!})  \to \tilde{I}
\otimes Q(A)$ are weak equivalences, 
we obtain that $ \tilde{I} \otimes Q(A_{!!})  \to \tilde{I} \otimes Q(A) $ is also a
weak equivalence.  \qed
\end{proof}

We verify that, for any commutative monoid object $A$, the internal-hom object $\Map_{\mathcal{M}} (\tilde{I},\,A)$ has a commutative monoid
structure in $\mathcal{M}$ inherited from that of $A$ and the homotopy section
$\Delta: \tilde{I} \to \tilde{I} \otimes \tilde{I}$ satisfying the
coherent conditions in Definition~\ref{idem-def}. Let
$\mathrm{ev}_{\tilde{I}}: \tilde{I} \otimes
\Map_{\mathcal{M}}(\tilde{I},\,A ) \to A$ denote the evaluation morphism
induced by the identity on $\Map_{\mathcal{M}}(\tilde{I},\,A )$. Then
the composition
\[
   \Map_{\mathcal{M}}(\tilde{I},\,A  ) \otimes  \Map_{\mathcal{M}}(\tilde{I},\,A  ) \otimes  (\tilde{I} \otimes \tilde{I})
 \cong   (  \tilde{I}  \otimes \Map_{\mathcal{M}}(\tilde{I},\,A  )  )  \otimes   (  \tilde{I}  \otimes \Map_{\mathcal{M}}(\tilde{I},\,A  )  )     \overset{\mathrm{ev}_{\tilde{I}} \otimes \mathrm{ev}_{\tilde{I}}}{\to}   A \otimes A \overset{\mu_A}{\to} A
\]
determines a canonical monoid structure
\[
 \mu_{ \Map_{\mathcal{M}}(\tilde{I},\,A  )  }: \Map_{\mathcal{M}}(\tilde{I},\,A  ) \otimes  \Map_{\mathcal{M}}(\tilde{I},\,A  ) \to    \Map_{\mathcal{M}}(\tilde{I} \otimes \tilde{I},\,A  )    \overset{\Delta^{*}}{\to}    \Map_{\mathcal{M}}(\tilde{I},\,A  ),
\]
satisfying associativity and unitality by Condition (3) in Definition~\ref{idem-def}.

In the case that the monoidal structure on $\mathcal{M}$ is closed, both symmetric monoidal structures on the arrow category $\mathrm{Ar}(\mathcal{M})$
are closed by Hovey~\cite[Theorem 1.2]{Smith-ideals}. For any pair of
morphisms $(f:X_0 \to X_1,\,g:Y_0 \to Y_1)$ in $\mathcal{M}$, the
internal-hom objects are determined as follows: The internal-hom object
$\Map_{\mathcal{M}}^\otimes(f,\,g)$ of the diagonal product monoidal
structure is defined by
\[
  \Map^{\otimes}(f,\,g):\Map_{\mathcal{M}}(X_0,\,Y_0) \times_{\Map_\mathcal{M}(X_0,\,Y_1)} \Map_\mathcal{M}(X_1,\,Y_1) \to  \Map_\mathcal{M}(X_1,\,Y_1)
\] and $\Map^\Box(f,\,g)$ of the pushout monoidal structure is defined by
\[
  \Map^\Box(f,\,g): \Map_\mathcal{M}(X_1,\,Y_0) \to \Map_\mathcal{M}(X_0,\,Y_0) \times_{\Map_\mathcal{M}(X_0,\,Y_1)} \Map_\mathcal{M}(X_1,\,Y_1).
\]

This section's main result is the following.
\begin{theorem}
\label{adjoint} Let $\mathcal{M}$ be a pointed closed symmetric monoidal
model category, and $V$ a monoidal unit of $\mathcal{M}$ with a homotopically
idempotent commutative Smith ideal $j:I \to V$. Assume that $\mathcal{M}$ is left proper and combinatorial. Then the Quillen adjunction
\[
 \tilde{j}\Box (-): \mathrm{Ar}(\CAlg(\mathcal{M})  ) \rightleftarrows \mathrm{Ar}(\CAlg(\mathcal{M})  ) : \Map^\Box(\tilde{j},\,-)
\]
 induces a Quillen equivalence
\[
\tilde{j}\Box (-): \mathrm{Ar}(\CAlg(\mathcal{M}^{\rm al})  ) \rightleftarrows \mathrm{Ar}(\CAlg(\mathcal{M}^{\rm al})  ): \Map^\Box(\tilde{j},\,-),
\]
which is equivalent to the trivial Quillen equivalence
\[
L_0 \circ \mathrm{Ev}_1  : \mathrm{Ar}(\CAlg(\mathcal{M}^{\rm al})  ) \rightleftarrows \mathrm{Ar}(\CAlg(\mathcal{M}^{\rm al})  ):  U_1 \circ    \mathrm{Ev}_0.
\]
\end{theorem}
\begin{proof}
Since the subcategory $\CAlg(\mathcal{M})$ is closed under homotopy
limits, for any morphism $f: A \to B$ of commutative monoid objects,
$\Map^\Box(\tilde{j},\,f):A \to \Map_{\mathcal{M}}(\tilde{I},\,A)
\times_{\Map_{\mathcal{M}}(\tilde{I},\,B)} B$ is a morphism of monoid
objects by Corollary~\ref{closed-alg}. The functor $
\Map^\Box(\tilde{j},- ):\mathrm{Ar}(\mathcal{M}) \to
\mathrm{Ar}(\mathcal{M}) $ can be restricted on
$\mathrm{Ar}(\CAlg(\mathcal{M}))$.

By Proposition~\ref{firm-alg}, $ j \Box \mathrm{Id} \to \mathrm{id}_{Ev_{1}(-)}
=L_0(\mathrm{Ev}_1((-)))$ is a projective equivalence in $
\CAlg(\mathcal{M}^{\rm al})$. Therefore, their Quillen right adjoints
$\Map^\Box (\tilde{j},\,-)$ and $U_1 \circ \mathrm{Ev}_0$ are weakly
equivalent on $\mathrm{Ar}(\CAlg(\mathcal{M}^{\rm al}))$. \qed
\end{proof}


We obtain an analogue of Gabber--Ramero~\cite[p.22, Proposition 2.2.29]{GR} as a corollary of Theorem~\ref{adjoint}.
\begin{corollary}
\label{app-almost} Let $\mathcal{M}$ be a left proper and combinatorial
pointed closed symmetric monoidal model category with a monoidal unit
object $V$ and a homotopically idempotent commutative Smith ideal $j: I
\to V$.  Then the functor $(-)_{!!} =\mathrm{Ev}_0( \tilde{j} \Box
L_1(-) ) : \CAlg (\mathcal{M}) \to \CAlg(\mathcal{M}) $ is a left
Quillen functor, whose right adjoint is the composition
\[
 \mathrm{Ev}_1( \Map^\Box( \tilde{j},\,U_0(-))) : \CAlg(\mathcal{M}) \to \CAlg(\mathcal{M})
\]
Furthermore, the left Quillen functor induces
a left Quillen equivalence \[
 (-)_{!!}:   \CAlg(\mathcal{M}^{\rm al})  \to    \CAlg(\mathcal{M})_{!!},
\]
where $\CAlg(\mathcal{M})_{!!}$ denotes the homotopically essential image of the
endofunctor
\[
	\mathrm{Ev}_0( \tilde{j} \Box L_1(- )) :
\CAlg (\mathcal{M}) \to \CAlg(\mathcal{M}).
\]
\end{corollary}
\begin{proof}
Since all of $L_1$, $\tilde{j}\Box (-)$, and $\mathrm{Ev}_0$ are left
Quillen functors, the composition $(-)_{!!}$ is also a left Quillen
functor, and its right adjoint is the composition of the corresponding right adjoint
functors
\[
	\mathrm{Ev}_1 \circ  \Map^\Box( \tilde{j},\,-) \circ U_0
: \CAlg(\mathcal{M}) \to \CAlg(\mathcal{M}).
\]
By Theorem~\ref{adjoint}, one has a weak equivalence $L^{\rm al} \circ
 (-)_{!!} \circ L^{\rm al} \simeq L^{\rm al} $, and by
 Proposition~\ref{firm-alg}, the endofunctor $(-)_{!!}: \CAlg
 (\mathcal{M}) \to \CAlg(\mathcal{M}) $ is weakly idempotent. Hence,
 those compositions \[ L^{\rm al} \circ(-)_{!!}: \CAlg(\mathcal{M}^{\rm
 al}) \to \CAlg(\mathcal{M})_{!!} \to \CAlg(\mathcal{M}^{\rm al})
\]
and
\[
  (-)_{!!} \circ  L^{\rm al}  :\CAlg(\mathcal{M})_{!!} \to    \CAlg(\mathcal{M}^{\rm al}) \to \CAlg(\mathcal{M})_{!!}
\]
are equivalent to the identity functors, respectively. \qed
\end{proof}

\section{Bousfield bilocalization of stable model categories}
\label{sec:bilocal}
A pointed model category is {\it stable} if its
homotopy category is a triangulated category. In a stable model
category, a homotopy Cartesian square is also homotopy coCartesian and
vice versa. Further, in this section, we consider stable monoidal model
categories generated by their monoidal unit objects. Such categories are
called {\it monogenic}, as explained in Definition~\ref{w-gen}. In this
section, we prove that Bousfield bilocalization functors determine
homotopically idempotent Smith ideals under the condition that the model
categories are stable, closed symmetric monoidal, and monogenic.
Furthermore, we show that homotopically closed objects with respect to
the Smith ideals determined by Bousfield bilocalization homotopically
coincide with local objects.

\subsection{Monogenic model categories}

Following MacLane~\cite[Chapter V, Section 7, p.127]{zbMATH00195199}, we
recall the definition of generators of categories: Let $\mathcal{M}$ be
a category and $\mathcal{G}$ a class of small objects of
$\mathcal{M}$. The set $\mathcal{G}$ is a {\it generator} of
$\mathcal{M}$ if for any pair of morphisms $h,\,h': X \to Y$ in
$\mathcal{M}$, $h \neq h'$ implies that there exists $S \in \mathcal{G}$
and a morphism $f: S \to X$ such that $h \circ f \neq h' \circ f $.  Following
Hovey's book~\cite[p.183, Definition 7.2.1]{Hoveybook}, we define
{\it weak generators} of model categories.
\begin{definition}[\rm{c.f.} Hovey~\cite{Hoveybook}]
\label{w-gen} Let $\mathcal{M}$ be a model category with a class
$\mathcal{G}$ of objects of $\mathcal{M}$. We say that $\mathcal{G}$
weakly generates $\mathcal{M}$ if the equivalence class of $\mathcal{G}$
generates the homotopy category of $\mathcal{M}$. A symmetric monoidal
model category weakly generated by the small monoidal unit is said to be
{\it monogenic}.
\end{definition}
By the following proposition, for stable model categories, the above definition coincides with \cite[p.183, Definition 7.2.1]{Hoveybook}.
\begin{proposition}
\label{lemGen} Let $\mathcal{M}$ be a pointed closed symmetric monoidal
model category with a class $\mathcal{G}$ of objects of
$\mathcal{M}$. Assume that $\mathcal{G}$ generates $\mathcal{M}$. Then
an object $M$ is contractible if and only if the internal
hom-object $\Map_\mathcal{M}(S,\,M)$ is contractible for any $S \in \mathcal{G}$.
\end{proposition}
\begin{proof}
The if-direction is clear. Let $M$ be an object of $\mathcal{M}$ such
that the internal-hom object $\Map_\mathcal{M}(S,\,M)$ is contractible
for any $S \in \mathcal{G}$.  Since $\mathcal{G}$ generates
$\mathcal{M}$, for any object $N$ and morphism $f:N \to M$, the
condition that $f$ is not null-homotopic implies that there is an
object $S \in \mathcal{G}$ and a morphism $i:S \to N$ such that $f \circ i : S
\to M$ is not null-homotopic, contradicting the generating condition. \qed
\end{proof}

\subsection{Symmetric monoidal bilocalization on stable model categories}
\begin{definition}
Let $\mathcal{M}$ be a model category. A functor $F: \mathcal{M} \to
\mathcal{N}$ is said to be a {\it Bousfield bilocalization} if it is a
Bousfield localization admitting fully faithful left adjoint and fully
faithful right adjoint such that all of those functors are Quillen
functors.
\end{definition}

We consider a Bousfield bilocalization $F: \mathcal{M} \to
L(\mathcal{M})$ and let $F_*$ denote its fully faithful left adjoint and
$U$ its fully faithful right adjoint, respectively.
\[
 \xymatrix@1{
      F: \mathcal{M} \ar[r] & \ar[l]<1mm>^{U}
\ar[l]<-1mm>_{F_*} L( \mathcal{M}).
}
\]
Further, we assume that the Quillen adjunction
\[
F_*: L(\mathcal{M}) \rightleftarrows  \mathcal{M} :F
\]
is lax Quillen monoidal. In particular, for any $M$ and $N$ of
$\mathcal{M}$, the induced morphism
\[
	\nabla: F_*(M) \otimes F_*(N) \to F_*(M \otimes N)
									      \]
is a weak equivalence.
Then the counit $c:F_* \circ F \to \mathrm{Id}_\mathcal{M}$ induces a
commutative Smith ideal $c_V: F_*(F(Q(V))) \to V$. Let $j:I \to V$
denote a cofibrant replacement of the homotopy image of $c_V$. We will
prove that $I$ is homotopically idempotent.

\begin{lemma}
\label{unit} Let $F: \mathcal{M} \to L(\mathcal{M})$ be a Bousfield
bilocalization of stable symmetric monoidal model category, and $F_*$
denote its fully faithful left adjoint. Assume that the Quillen
adjunction
\[
 F_* :  L(\mathcal{M}) \rightleftarrows \mathcal{M} :F
\]
is lax Quillen monoidal. Let $j: I \to V$ be a homotopy image object of
$q(V) \circ c_{Q(V)} : F_*(F(Q(V))) \to Q(V) \to V$. Then all of the
canonical morphisms in the diagram
\[
  I \otimes I \rightarrow F_*(F(Q(V))) \otimes I \rightarrow F_*(F(Q(V))) \otimes F_*(F(Q(V))) \rightarrow F_*(F(Q(V))) \leftarrow I
\]
are weak equivalences.
\end{lemma}
 \begin{proof}
Let $q(V): Q(V) \to V$ be a cofibrant replacement of $V$. Since the
Bousfield localization $F$ is both a left and a right Quillen functor, $F(Q(V)) \to
F(V)$ is a trivial fibration, and $F(Q(V))$ is a cofibrant object of $L(\mathcal{M})$.
Furthermore, since $F_*$ is lax Quillen monoidal, $F(Q(V))$ is
equivalent to a monoidal unit of $L(\mathcal{M})$. Therefore, all
morphisms in
\[
 F(Q(V)) \otimes F(Q(V)) \to F(V) \otimes F(Q(V)) \to F(Q(V)) \to F(V)
\]
are weak equivalences by Ken Brown's lemma~\cite[p.6 Lemma 1.1.12]{Hoveybook}. In particular,
\[
  \mu: F(Q(V)) \otimes F(Q(V)) \to F(Q(V))
\]
is a weak equivalence between cofibrant objects. Since $F_*$ is a left
Quillen functor, by Ken Brown's lemma~\cite[p.6 Lemma 1.1.12]{Hoveybook} again,
\[
  F_*(\mu) : F_* ( F(Q(V)) \otimes F(Q(V))  ) \to  F_* (F(Q(V)))
\]
is a weak equivalence. Note that  the product
\[
\mu_{F_*(F(Q(V)))}: F_*(F(Q(V))) \otimes F_* (F(Q(V))) \to F_*(F(Q(V)))
\]
is homotopic to the pushout of $\nabla: F_*(F(Q(V))) \otimes F_*(F(Q(V))) \to F_*(F(Q(V)) \otimes F(Q(V)))$ along the morphism
$F_*(F(\mu)): F_*( F(Q(V)) \otimes F(Q(V))) \to F_*(F(Q(V)))$.  By the
left properness of $\mathcal{M}$, $\mu: F_*(F(Q(V))) \otimes
F_* (F(Q(V))) \to F_*(F(Q(V)))$ is a weak equivalence.

By the stability of $\mathcal{M}$, the morphism $(j: I \to V )
\to (q(V) \circ c_{Q(V)}: F_* ( F(Q(V)) ) \to V ) $ is a weak
equivalence in the projective model structure. Since $ I \to F_* (
F(Q(V)) ) $ is a weak equivalence of cofibrant objects, by Ken Brown's
lemma again, both morphisms in $I \otimes I \rightarrow F_*(F(Q(V)))
\otimes I \rightarrow F_*(F(Q(V))) \otimes F_*(F(Q(V)))$ are weak
equivalences.
 \qed
\end{proof}

\begin{proposition}
\label{theorem-idem} Let $F: \mathcal{M} \to L(\mathcal{M})$ be a
Bousfield bilocalization of a stable symmetric monoidal model category,
and $F_*$ denote its fully faithful left adjoint. Assume that the
Quillen adjunction
\[
 F_* :  L(\mathcal{M}) \rightleftarrows \mathcal{M} :F
\]
is lax Quillen monoidal.  Then the counit $c:F_* \circ F \to
\mathrm{Id}_\mathcal{M}$ determines a homotopically idempotent
commutative Smith ideal $j:I \to V$ of a monoidal unit object $V$, where
$I$ is a cofibrant replacement of the homotopy image of $q(V) \circ
c_{Q(V)} : F_*(F(Q(V))) \to Q(V) \to V$.
\end{proposition}
\begin{proof}
Since $\mathcal{M}$ is stable, the unit morphism $c_V \to j $ is a
projective weak equivalence in the arrow category of
$\mathrm{Ar}(\mathcal{M})$.
Condition (1) of Definition~\ref{idem-def} is already satisfied.  By
Lemma~\ref{unit}, condition (2) in Definition~\ref{idem-def} holds. By
the stability of $\mathcal{M}$, the right exact functor $I \otimes (-)$
is also left exact, implying that the induced morphism $ \tilde{I}
\otimes I \to \tilde{I} $ is a weak equivalence.  \qed
\end{proof}

\begin{proposition}
\label{firm} Let $\mathcal{M}$ be a pointed closed symmetric monoidal
model category with a monoidal unit $V$ and $F: \mathcal{M}
\to L(\mathcal{M})$ a symmetric monoidal Bousfield bilocalization, and
$F_*: L(\mathcal{M}) \to \mathcal{M} $ denote the homotopically fully
faithful left adjoint. Assume that the Quillen adjunction
\[
 F_* : L(\mathcal{M}) \rightleftarrows  \mathcal{M} :F
\]
is lax Quillen monoidal. Let $j: I \to V$ be a homotopically idempotent
commutative Smith ideal determined by the counit $c: F_* \circ F \to
\mathrm{Id}$. Then
\[
 F(I \otimes Q(M)) \to F(M)
\]
is a weak equivalence for any object $M$.
\end{proposition}
\begin{proof}
Let $j:I \to F_*(F(V))$ be a cofibrant replacement of the homotopy image
of the counit $c_V:F_*(F(V)) \to V $. Then $F(j):F(I) \to \mathrm{Im}(c_V) $ is a
trivial fibration.  Therefore, $F(I) \to F(F_*(F(V))$ is a weak
equivalence.  Since $F_*$ is fully faithful, the natural transformation
$F(c): F \circ F_* \circ F \to F $ is a weak equivalence. Therefore,
$F(I) \to F(F(V))$ is also a weak equivalence.  Therefore
\begin{multline*}
F(\tilde{I}) \to F(I) \otimes F(I) \to F(F_*(F(Q(V)))) \otimes  F(F_*(F(Q(V))))  \\
\to F(Q(V)) \otimes F(Q(V)) \to F(V) \otimes F(Q(V))
 \to F(V) \otimes F(V) \to F(V)
\end{multline*}
is a weak equivalence and $F(\tilde{I} \otimes Q(M)) \to F(\tilde{I})
\otimes F(Q(M)) \to F(V) \otimes F(Q(M)) \to F(Q(M)) $ is a weak
equivalence for any object $M \in \mathcal{M}$. Since $F$ is a right
Quillen functor, for any cofibrant replacement $Q(M) \to M$, $ F(Q(M))
\to F(M)$ is a trivial fibration, giving us the conclusion.  \qed
\end{proof}

\begin{theorem}
\label{theoremB} Let $\mathcal{M}$ be a stable closed symmetric monoidal
model category and $F: \mathcal{M}
\to L(\mathcal{M})$ a Bousfield bilocalization, and
$F_*: L(\mathcal{M}) \to \mathcal{M} $ denote the homotopically fully
faithful left adjoint. Assume that $\mathcal{M}$ is monogenic and the Quillen adjunction
\[
 F_* : L(\mathcal{M}) \rightleftarrows  \mathcal{M} :F
\]
is lax Quillen monoidal. Let $j: I \to V$ be a homotopically idempotent
commutative Smith ideal obtained in Proposition~\ref{theorem-idem}.  Let $U$ denote a right Quillen adjoint of $F$. Then all morphisms in the diagram
\[
        UF(M)  \overset{\varphi}{\rightarrow}   UF \Map_\mathcal{M}(\tilde{I},\, M) \overset{\psi}{\leftarrow}      \Map_\mathcal{M}(\tilde{I},\, M)
\]
are weak equivalences for any fibrant object $M$ of $\mathcal{M}$.
\end{theorem}
\begin{proof}
Let $j: I \to V$. For any fibrant object $M$ of $\mathcal{M}$,
one has a commutative diagram
{\small \[
\xymatrix@1{
\Hom_{\mathrm{Ho}(\mathcal{M})}(V,\, UF(M)   )
\ar[r]^{\hspace{-5mm}\varphi_*}  &   \Hom_{\mathrm{Ho}(\mathcal{M})}(V,\,  UF\Map_\mathcal{M}(I,\, M )  )   & \ar[l]_{\hspace{5mm}\psi_*}  \Hom_{\mathrm{Ho}(\mathcal{M})}(V,\,  \Map_\mathcal{M}(\tilde{I},\, M )  ) \\
\Hom_{\mathrm{Ho}(\mathcal{M})}(F_*(F(Q(V))),\, M   ) \ar[r] \ar[u]   & \Hom_{\mathrm{Ho}(\mathcal{M})}(F_*(F(Q(V))) \otimes I,\,  M  )   \ar[u] & \ar[l] \ar[u] \Hom_{\mathrm{Ho}(\mathcal{M})}(\tilde{I},\, M),
}
\]
}
where all vertical maps are isomorphisms. Since $M$ is fibrant, all of
the lower arrows induced by weak equivalences in Lemma~\ref{unit} are
isomorphisms.  Therefore both $\varphi_*$ and $\psi_*$ are
isomorphisms. Since $V$ generates $\mathcal{M}$, both of the homotopy
cokernels of $\varphi$ and $\psi$ are contractible, implying that they
are weak equivalences. \qed
\end{proof}

\section{Operadic non-unital algebra objects of stable symmetric monoidal model categories}
\label{sec:non-unital}
\subsection{Operadic Smith ideals}
\begin{remark}
The definitions of {\it colors}, {\it symmetric sequences}, and {\it
colored operads} in symmetric monoidal categories can be found in
standard references on operads; see, for example, White--Yau~\cite[Section 3]{WY2024}.

Further, in the case
that the monoidal structure on $\mathcal{M}$ is closed, algebras over
$\mathcal{O}$ are determined by operad maps from $\mathcal{O}$ to
endomorphism operads.
For any $\mathfrak{C}$-colored object $A=\{A_{c} \}_{c \in
\mathfrak{C}}$ of $\mathcal{M}$, the endomorphism operad
$\mathbf{End}_{\mathcal{M}}(A)$ is defined by
\[
  \mathbf{End}_{\mathcal{M}}(A)(c_1,\ldots,c_n;d)=
\Map_\mathcal{M}(A_{c_1}\otimes \cdots \otimes A_{c_n}, A_d).
\]
Giving a map $\theta: \mathcal{O} \to \mathbf{End}_{\mathcal{M}}(A)$ of
$\mathfrak{C}$-colored operads determines an $\mathcal{O}$-algebra
structure on $A$.
\end{remark}
\begin{definition}
\label{Arrow-Operad}
Let $\mathcal{M}$ be a symmetric monoidal closed (model) category and
$\mathcal{O}$ a colored operad in $\mathcal{M}$. Then we write
$\overrightarrow{\mathcal{O}^\Box}=L_1(\mathcal{O})$ and
$\overrightarrow{\mathcal{O}^\otimes}=L_0(\mathcal{O})$. Since $L_1$
(resp. $L_0$) is strict monoidal (See \cite[Definition 3.1.1
(7)]{WY2024}), $\overrightarrow{\mathcal{O}^\Box}$
(resp. $\overrightarrow{\mathcal{O}^\otimes}$) is a colored operad in
$\mathrm{Ar}^\Box(\mathcal{M})$
(resp. $\mathrm{Ar}^\otimes(\mathcal{M})$). Let
$\Alg_{\overrightarrow{\mathcal{O}^\Box}}(\mathrm{Ar}^\Box(\mathcal{M}))$
(resp. $\Alg_{\overrightarrow{\mathcal{O}^\otimes}}(\mathrm{Ar}^\otimes(\mathcal{M}))$)
denote the category of $\mathcal{O}^\Box$-algebra objects of
$\mathrm{Ar}^\Box(\mathcal{M})$ (resp. $\mathcal{O}^\otimes$-algebra
objects of $\mathrm{Ar}^\otimes(\mathcal{M})$).
\end{definition}

Operadic Smith ideals are defined as follows:
\begin{definition}[\cite{WY2024} Definition 3.3.1]
Let $\mathcal{M}$ be a symmetric monoidal closed (model) category and
$\mathcal{O}$ a colored operad in $\mathcal{M}$. Then an
{$\mathcal{O}$-Smith ideal} is an object of
$\Alg_{\overrightarrow{\mathcal{O}^\Box}}(\mathrm{Ar}^\Box(\mathcal{M}))$.
\end{definition}

\subsection{Non-unital algebras from the (operadic) Smith ideal theory viewpoint}
We briefly review non-unital rings, which are rings not assumed to have a multiplicative unit $1$. Any unital
ring is regarded as a non-unital ring by forgetting the existence of
$1$. An {\it augmentation} $\varepsilon_A: A \to V$ is a ring homomorphism such that the unit homomorphism $V \to A$ is a section of $\varepsilon_A$, and a
$V$-algebra with an augmentation is called an {\it augmented
$V$-algebra}. Let $\Alg_{V/ / V}$ denote the category of augmented
$V$-algebras and $\Alg^{\rm nu}_V$ the category of non-unital $V$-algebras. It is well-known that there exists a categorical
equivalence between $\Alg^{\rm nu}_V$ and $\Alg_{V/ / V}$ as follows:
For any  $V$-algebra $A$, the direct sum $V \oplus A$ has a canonical
unital ring structure defined by
\[
 (m,\,a) \cdot (n,\,b) = (mn,\, na+mb+ab)
\]
for $m, \, n \in V$ and $a,\,b \in A$.  It is easily
checked that the adjunction
\[
  V \oplus (-) :  \Alg^{\rm nu}_{V} \rightleftarrows  \Alg_{V/ / V}: \mathrm{Ker} (\epsilon_{(-)} : - \to V)
\]
is a pair of categorical equivalences, where $\mathrm{Ker} (\varepsilon_{(-)}: - \to V)$ is the kernel functor of augmentations.

From the Smith ideal theory viewpoint, the above categorical equivalence is a special case of cokernel-and-kernel correspondence between the arrow categories. We consider the case of abelian categories. Let $\mathcal{M}$ be a symmetric closed monoidal abelian category and $\mathrm{Ar}^{\rm im}(\mathcal{M})$ denote the full subcategory spanned by those objects $f:R \to S$ such that the unit $f \to \mathrm{ker} (
\mathrm{cok}(f))$ is an isomorphism. Then the cokernel functor
$\mathrm{Ar}^{\rm im}(\mathcal{M}) \to \mathrm{Ar}^{\rm
coim}(\mathcal{M})$ is a categorical equivalence.
\begin{proposition}
\label{stabilization} Let $\mathcal{M}$ be a locally presentable abelian
symmetric monoidal category. Then the arrow category
$\mathrm{Ar}(\mathcal{M})$ is also locally presentable and there exist
reflective localization functors $L_{\rm im}:
\mathrm{Ar}^\Box(\mathcal{M}) \to \mathrm{Ar}^{\rm im}(\mathcal{M}) $ by
the unit $\mathrm{id} \to \ker \circ \mathrm{coker} $ and $L_{\rm coim}:
\mathrm{Ar}^\otimes(\mathcal{M}) \to \mathrm{Ar}^{\rm coim}(\mathcal{M})
$ by the counit $\mathrm{coker} \circ \ker \to \mathrm{id}$ such that the adjunction
\[
\mathrm{cok} :
\mathrm{Ar}^\Box(\mathcal{M}) \rightleftarrows \mathrm{Ar}^\otimes(\mathcal{M}):
\mathrm{ker}
\]
induces categorical equivalences
 \[
  \mathrm{cok} : \mathrm{Ar}^{\rm im}(\mathcal{M})
 \rightleftarrows \mathrm{Ar}^{\rm coim}(\mathcal{M}): \mathrm{ker}.
\]
\end{proposition}
\begin{proof}
Since any abelian category is binormal, $f \to \mathrm{im}(f)$ is an
isomorphism if and only if $f$ is a monomorphism. By
Ad{\'a}mek--Rosick{\'y}~\cite[p.44, Corollary 1.5.4]{LocPresentable},
the arrow category is locally presentable, and it admits reflective
localization. By the definitions of those functors: $\mathrm{im}=
\mathrm{ker} \circ \mathrm{cok}$ and $\mathrm{coim}= \mathrm{cok} \circ
\mathrm{ker}$, the restrictions of $\mathrm{cok}$ and $\mathrm{ker}$ to $\mathrm{Ar}^{\rm
im}(\mathcal{M})$ and $\mathrm{Ar}^{\rm coim}(\mathcal{M})$ are
quasi-inverse functors. \qed
\end{proof}

For any pointed symmetric monoidal closed category $\mathcal{M}$ with a
monoidal unit $V$, let $\Alg_\mathcal{O}(\mathcal{M})_{V//V}$ denote the
full subcategory of
$\Alg_{\mathcal{O}^\otimes}(\mathrm{Ar}^\otimes(\mathcal{M}))$ spanned by
augmentations of $\mathcal{O}^\otimes$-algebra objects.
\begin{definition}
 Let $\mathcal{M}$ be a locally presentable symmetric monoidal abelian
 category with a monoidal unit $V$. A commutative non-unital monoid
 object of $\mathcal{M}$ is a commutative Smith ideal $j:I \to R$ in the
 localized full subcategory $\mathrm{Ar}^{\rm im}(\mathcal{M})$
 satisfying that the cokernel $\mathrm{cok}(j): R \to \mathrm{Coker}(j)
 $ is isomorphic to an augmentation $\varepsilon_R: R \to V$, and
 $\Alg_{\mathcal{O}}^{\rm nu}(\mathcal{M})$ denotes the full subcategory
 of $\Alg_{\mathcal{O}^\Box}(\mathrm{Ar}^\Box(\mathcal{M}))$ spanned by
 non-unital $\mathcal{O}^\Box$-algebra objects. In particular, $\CAlg^{\rm nu}(\mathcal{M})$ denotes the full subcategory $\mathrm{CAlg}(\mathrm{Ar}^\Box(\mathcal{M}))$ spanned by commutative Smith ideals whose cokernels are isomorphic to $V$. We say that an object of $\CAlg^{\rm nu}(\mathcal{M})$ is a {\it non-unital commutative $V$-algebra object} of $\mathcal{M}$.
\end{definition}

\begin{proposition}
\label{non-unital}
 Let $\mathcal{M}$ be a locally presentable symmetric monoidal abelian
 category with a monoidal unit $V$. The categorical equivalence
\[
  \mathrm{cok} : \mathrm{Ar}^{\rm im}(\mathcal{M})
 \to \mathrm{Ar}^{\rm coim}(\mathcal{M})
\]
induces a categorical equivalence
\[
 \Alg^{\rm nu}_{\mathcal{O}^\Box}(\mathcal{M})
\to \Alg_{\mathcal{O}^\otimes}(\mathcal{M})_{V//V}.
\]
\end{proposition}
\begin{proof}
By definition of the category $\Alg_{\mathcal{O}^\Box}^{\rm
nu}(\mathcal{M})$, one has an isomorphism $V \oplus I \to R$. The
assertion is clear. \qed
\end{proof}
\subsection{Definition of operadic non-unital algebra objects of symmetric monoidal categories}
We generalize the categorical equivalence in Proposition~\ref{non-unital}. The main work of this section is to give a suitable definition of non-unital $\mathcal{O}$-algebra objects, which is Definition~\ref{Operadic-non-unital}.

By the notation in Definition~\ref{Arrow-Operad}, we establish a definition of operadic non-unital algebra objects of symmetric monoidal model categories.
\begin{definition}
\label{Operadic-non-unital}
Let $\mathcal{M}$ be a symmetric monoidal closed model category with a monoidal unit $V$. An {\it homotopically augmented
$\mathcal{O}$-algebra over $V$} $f: A \to B$ is an object of
$\Alg_{\overrightarrow{\mathcal{O}^\otimes}}(\mathrm{Ar}^\otimes(\mathcal{M}))$
such that the unit $U^0(V)=(\mathrm{id}_V: V \to V) \to (f: A \to B)$ has a
homotopy section. Let $
\Alg_{\mathcal{O}^\otimes}(\mathrm{Ar}^\otimes(\mathcal{M}))_{V//V}$
denote the semi-model category of homotopically augmented
$V$-algebras. In particular, $\CAlg( \mathcal{M})_{V//V}$ denotes the
semi-model category of homotopically augmented $V$-algebras.

A {\it non-unital $\mathcal{O}$-algebra over $V$} is a
$\mathcal{O}$-Smith ideal whose cokernel is a homotopically augmented
$\mathcal{O}$-algebra.  Let $\Alg_{\mathcal{O}}^{\rm nu}(\mathcal{M})$
denote the full subcategory of
$\Alg_{\mathcal{O}^\Box}(\mathrm{Ar}^\Box(\mathcal{M}))$ spanned by
non-unital $\mathcal{O}^\Box$-algebra objects. In particular,
$\CAlg^{\rm nu}(\mathcal{M})$ denotes the semi-model category of {\it
non-unital commutative $V$-algebra object} of $\mathcal{M}$.
\end{definition}

By the following theorem of White and Yau, we have a Quillen equivalence between semi-model categories of non-unital $\mathcal{O}$-algebras and homotopically augmented $\mathcal{O}$-algebras.
\begin{theorem}[\cite{WY2024} Theorem 4.4.1]
\label{WY-Smith} Let $\mathcal{M}$ be a stable symmetric monoidal model
category with a monoidal unit $V$.
Assume that the adjunction
$\mathrm{cok}: \mathrm{Ar}^\Box(\mathcal{M}) \rightleftarrows
\mathrm{Ar}^\otimes(\mathcal{M}): \mathrm{ker} $ induces a Quillen
adjunction
\[
 \mathrm{cok}:  \Alg_{\mathcal{O}^\Box}(\mathrm{Ar}^\Box(\mathcal{M})) \rightleftarrows \Alg_{\mathcal{O}^\otimes}(\mathrm{Ar}^\otimes(\mathcal{M})) : \mathrm{ker}
\]
between semi-model categories, and the colored operad $\mathcal{O}$ in
 $\mathcal{M}$ is cofibrant in $
 \mathrm{Ar}^\Box(\mathcal{M})^{\mathfrak{C}}$.  Then the Quillen
 adjunction is a Quillen equivalence. \qed
\end{theorem}
\begin{proposition}
\label{non-unital-main} Under the assumption of Theorem~\ref{WY-Smith},
the Quillen equivalence in Theorem~\ref{WY-Smith} induces a Quillen equivalence
\[
 \mathrm{cok}:  \Alg_{\mathcal{O}^\Box}^{\rm nu}(\mathrm{Ar}^\Box(\mathcal{M}))  \rightleftarrows \Alg_{\mathcal{O}^\otimes}(\mathrm{Ar}^\otimes(\mathcal{M}))_{V//V} : \mathrm{ker}.
\]\qed
\end{proposition}

White introduced the commutative monoid axiom of monoidal model categories.
\begin{definition}[\cite{WH2017}, Definition 3.1 and 3.4]
A symmetric monoidal model category $\mathcal{M}$ is said to satisfy the
{\it commutative monoid axiom} if, for all $n >0$, the $n$-fold
symmetric pushout product functor $(-)^{\Box n}/\Sigma_n: \mathcal{M}
\to \mathcal{M} $ preserves all trivial cofibrations. Further, if the
$n$-fold symmetric pushout product functor preserves all cofibrations
and trivial cofibrations, $\mathcal{M}$ is said to satisfy the
{\it strong commutative monoid axiom}.
\end{definition}
In particular, by \cite[Corollary 5.1.2]{WY2024}, we obtain the following:
\begin{corollary}
\label{non-unital-cor}
Let $\mathcal{M}$ be a cofibrantly generated stable model category
satisfying the strong commutative monoid axiom and the monoid axiom, and
assume that the category of commutative monoids in
$\mathrm{Ar}^\Box(\mathcal{M})$ has the transferred model structure.
Then
the cokernel-and-kernel adjunction induces a Quillen equivalence
\[
  \mathrm{cok}:  \CAlg^{\rm nu}( \mathcal{M})  \rightleftarrows  \CAlg( \mathcal{M})_{V//V}: \mathrm{ker}
\]
between model categories. \qed
\end{corollary}

\begin{remark}
If $\mathcal{M}$ is a locally presentable stable symmetric monoidal
model category, the presentable $\infty$-category of augmented algebra
is represented by $\CAlg(\mathcal{M})_{V//V}$. Hence
the left-hand side $\CAlg^{\rm nu}(\mathcal{M})$ is equivalent to the
$\infty$-category defined by Lurie~\cite[p.949, Definition 5.4.4.9 and
Proposition 5.4.4.10]{HA}.
\end{remark}

\subsubsection*{Acknowledgements}  I would like to thank anonymous referees for giving me innovative ideas and constructive comments to use Hovey's Smith ideal theory and White and Yau's operadic Smith ideal theory. I am also grateful to Isamu Iwanari for pointing out the derived-completion example.

\paragraph{Use of AI tools.}
The author used Google’s Gemini-pro 3.1 as a conversational research aid for brainstorming and refining mathematical formulations, and OpenAI’s Prism (GPT-5.2) for editorial and expository assistance, including structural refinement. These tools were used to improve the manuscript’s clarity and to indicate where abbreviated arguments warranted fuller exposition. All results, proofs, and final formulations were reviewed, verified, and approved by the author, who bears sole responsibility for the content of this work.

\bibliographystyle{amsalpha}
\bibliography{bibkato-arxiv}

@misc{Q,
  author = {Quillen, Daniel},
  title = {Module theory over nonunital rings},
  howpublished = {\url{https://ncatlab.org/nlab/files/QuillenModulesOverRngs.pdf}},
  year = {1996}
}

@article {Faltings,
    AUTHOR = {Faltings, Gerd},
     TITLE = {{$p$}-adic {H}odge theory},
   JOURNAL = {J. Amer. Math. Soc.},
  FJOURNAL = {Journal of the American Mathematical Society},
    VOLUME = {1},
      YEAR = {1988},
    NUMBER = {1},
     PAGES = {255--299},
      ISSN = {0894-0347},
   MRCLASS = {14F30 (14G20)},
 MRNUMBER = {924705},
MRREVIEWER = {Thomas Zink},
       DOI = {10.2307/1990970},
       URL = {https://doi.org/10.2307/1990970},
}

@book {GR,
    AUTHOR = {Gabber, Ofer and Ramero, Lorenzo},
     TITLE = {Almost ring theory},
    SERIES = {Lecture Notes in Mathematics},
    VOLUME = {1800},
 PUBLISHER = {Springer-Verlag, Berlin},
      YEAR = {2003},
     PAGES = {vi+307},
      ISBN = {3-540-40594-1},
   MRCLASS = {13D10 (13B40 13D03 14G22 18D10)},
 MRNUMBER = {2004652},
       DOI = {10.1007/b10047},
       URL = {https://doi.org/10.1007/b10047},
}

@misc{Smith-ideals,
    author = {Hovey, Mark},
    title = {Smith ideals of structured ring spectra},
    howpublished = {arXiv:1401.2850},
    year = {2014},
    doi = {10.48550/arXiv.1401.2850},
    url = {https://arxiv.org/abs/1401.2850}
}

@Article{WY2024,
 Author = {White, David and Yau, Donald},
 Title = {Smith ideals of operadic algebras in monoidal model categories},
 FJournal = {Algebraic \& Geometric Topology},
 Journal = {Algebr. Geom. Topol.},
 ISSN = {1472-2747},
 Volume = {24},
 Number = {1},
 Pages = {341--392},
 Year = {2024},
 Language = {English},
 DOI = {10.2140/agt.2024.24.341},
 Keywords = {18C20,18G65,18M75,55P43,55U35},
 zbMATH = {7827844}
}

@Book{zbMATH00195199,
    Author = {MacLane, Saunders},
    Title = {Categories for the working mathematician.},
    Edition = {4th corrected printing},
    FSeries = {Graduate Texts in Mathematics},
    Series = {Grad. Texts Math.},
    ISSN = {0072-5285},
    Volume = {5},
    ISBN = {3-540-90035-7},
    Year = {1988},
    Publisher = {New York etc.: Springer-Verlag},
    Language = {English},
    Keywords = {18-01},
    zbMATH = {195199},
    Zbl = {0705.18001}
}

@misc{hebestreit2025notehigherringtheory,
      title={A note on higher almost ring theory}, 
      author={Fabian Hebestreit and Peter Scholze},
      year={2025},
      eprint={2409.01940},
      archivePrefix={arXiv},
      primaryClass={math.AC},
      url={https://arxiv.org/abs/2409.01940}, 
}

@article {Sh12,
    AUTHOR = {Scholze, Peter},
     TITLE = {Perfectoid spaces},
   JOURNAL = {Publ. Math. Inst. Hautes \'{E}tudes Sci.},
  FJOURNAL = {Publications Math\'{e}matiques. Institut de Hautes \'{E}tudes
              Scientifiques},
    VOLUME = {116},
      YEAR = {2012},
     PAGES = {245--313},
      ISSN = {0073-8301},
   MRCLASS = {14G99},
 MRNUMBER = {3090258},
MRREVIEWER = {Jean-Marc Fontaine},
       DOI = {10.1007/s10240-012-0042-x},
       URL = {https://doi.org/10.1007/s10240-012-0042-x},
}

@misc{Bhatt-lecture,
  author       = {Bhatt, Bhargav},
  title        = {Lecture notes for a class on perfectoid spaces},
  howpublished = {\url{http://www-personal.umich.edu/~bhattb/teaching/mat679w17/lectures.pdf}},
  year         = {2017},
}

@article {MR2771591,
    AUTHOR = {Barwick, Clark},
     TITLE = {On left and right model categories and left and right
              {B}ousfield localizations},
   JOURNAL = {Homology Homotopy Appl.},
  FJOURNAL = {Homology, Homotopy and Applications},
    VOLUME = {12},
      YEAR = {2010},
    NUMBER = {2},
     PAGES = {245--320},
      ISSN = {1532-0073},
   MRCLASS = {18G55},
 MRNUMBER = {2771591},
MRREVIEWER = {Philippe Gaucher},
       URL = {http://projecteuclid.org/euclid.hha/1296223884},
}

@article{zbMATH00063621,
 author = {Greenlees, J. P. C. and May, J. P.},
 title = {Derived functors of {{\(I\)}}-adic completion and local homology},
 fjournal = {Journal of Algebra},
 journal = {J. Algebra},
 issn = {0021-8693},
 volume = {149},
 number = {2},
 pages = {438--453},
 year = {1992},
 language = {English},
 doi = {10.1016/0021-8693(92)90026-I},
 keywords = {18E25,18G10},
 zbMATH = {63621},
 Zbl = {0774.18007}
}

@article{zbMATH06306453,
 author = {Porta, Marco and Shaul, Liran and Yekutieli, Amnon},
 title = {On the homology of completion and torsion},
 fjournal = {Algebras and Representation Theory},
 journal = {Algebr. Represent. Theory},
 issn = {1386-923X},
 volume = {17},
 number = {1},
 pages = {31--67},
 year = {2014},
 language = {English},
 doi = {10.1007/s10468-012-9385-8},
 keywords = {13D07,13B35,13C12,13D09,18E30},
 zbMATH = {6306453},
 Zbl = {1316.13020}
}

@article{PSYErratum,
 author = {Porta, Marco and Shaul, Liran and Yekutieli, Amnon},
 title = {Erratum to: {On the homology of completion and torsion}},
 fjournal = {Algebras and Representation Theory},
 journal = {Algebr. Represent. Theory},
 issn = {1386-923X},
 volume = {18},
 number = {5},
 pages = {1401--1405},
 year = {2015},
 language = {English},
 doi = {10.1007/s10468-015-9557-4}
}

@book {Hoveybook,
    AUTHOR = {Hovey, Mark},
     TITLE = {Model categories},
    SERIES = {Mathematical Surveys and Monographs},
    VOLUME = {63},
 PUBLISHER = {American Mathematical Society, Providence, RI},
      YEAR = {1999},
     PAGES = {xii+209},
      ISBN = {0-8218-1359-5},
   MRCLASS = {55U35 (18D15 18G30 18G55)},
MRNUMBER = {1650134},
MRREVIEWER = {Teimuraz Pirashvili},
}

@misc{spitzweck2001operads,
      title = {Operads, Algebras and Modules in General Model Categories},
      author = {Spitzweck, Markus},
      howpublished = {arXiv:math/0101102},
      year = {2001},
      doi = {10.48550/arXiv.math/0101102},
      url = {https://arxiv.org/abs/math/0101102}
}

@InCollection{WH2017,
 Author = {White, David},
 Title = {Monoidal {Bousfield} localizations and algebras over operads},
 BookTitle = {Equivariant topology and derived algebra. Based on the conference, Trondheim, Norway, 2019. In honour of Professor J. P. C. Greenlees' 60th birthday.},
 ISBN = {978-1-108-93194-6; 978-1-108-94287-4},
 Pages = {180--240},
 Year = {2022},
 Publisher = {Cambridge: Cambridge University Press},
 Language = {English},
 DOI = {10.1017/9781108942874.007},
 Keywords = {18N55},
 URL = {www.cambridge.org/core/books/equivariant-topology-and-derived-algebra/monoidal-bousfield-localizations-and-algebras-over-operads/87DD0D8D79556C8135D4D9DC1E944358},
 zbMATH = {7469290},
 Zbl = {1486.18034}
}

@Article{zbMATH01924523,
    Author = {Schwede, Stefan and Shipley, Brooke},
    Title = {Equivalences of monoidal model categories},
    FJournal = {Algebraic \& Geometric Topology},
    Journal = {Algebr. Geom. Topol.},
    ISSN = {1472-2747},
    Volume = {3},
    Pages = {287--334},
    Year = {2003},
    Language = {English},
    DOI = {10.2140/agt.2003.3.287},
    Keywords = {55U35,18D10,55P43,55P62},
    zbMATH = {1924523},
    Zbl = {1028.55013}
}

@Book{LocPresentable,
 Author = {Ad{\'a}mek, Ji{\v{r}}{\'{\i}} and Rosick{\'y}, Ji{\v{r}}{\'{\i}}},
 Title = {Locally presentable and accessible categories},
 FSeries = {London Mathematical Society Lecture Note Series},
 Series = {Lond. Math. Soc. Lect. Note Ser.},
 ISSN = {0076-0552},
 Volume = {189},
 ISBN = {0-521-42261-2},
 Year = {1994},
 Publisher = {Cambridge: Cambridge University Press},
 Language = {English},
 Keywords = {18C10,03E55,03G30,18-02},
 zbMATH = {575948},
 Zbl = {0795.18007}
}

@misc{HA,
  author = {Lurie, Jacob},
  title = {Higher Algebra},
  howpublished = {\url{https://www.math.ias.edu/~lurie/papers/HA.pdf}},
  year = {2017}
}

\nocite{Hirschhorn}
\nocite{HT}
\nocite{kato2023non-unital}
\end{document}